\documentclass[11pt]{amsart}
\usepackage{color}
\usepackage[dvips]{graphicx}
\usepackage{amscd}
\usepackage{amsmath}
\usepackage{amssymb}
\usepackage{latexsym}


\newcommand{\tun}{\begin{picture}(5,0)(-2,-1)
\put(0,0){\circle*{2}}
\end{picture}}

\newcommand{\tdeux}{\begin{picture}(7,7)(0,-1)
\put(3,0){\circle*{2}}
\put(3,5){\circle*{2}}
\put(3,0){\line(0,1){5}}
\end{picture}}

\newcommand{\ttroisun}{\begin{picture}(15,12)(-5,-1)
\put(3,0){\circle*{2}}
\put(6,7){\circle*{2}}
\put(0,7){\circle*{2}}
\put(-0.65,0){$\vee$}
\end{picture}}

\newcommand{\ttroisdeux}{\begin{picture}(5,15)(-2,-1)
\put(0,0){\circle*{2}}
\put(0,5){\circle*{2}}
\put(0,10){\circle*{2}}
\put(0,0){\line(0,1){5}}
\put(0,5){\line(0,1){5}}
\end{picture}}


\newcommand{\tdun}[1]{\begin{picture}(10,5)(-2,-1)
\put(0,0){\circle*{2}}
\put(3,-2){\tiny #1}
\end{picture}}

\newcommand{\tddeux}[2]{\begin{picture}(12,10)(0,-1)
\put(3,0){\circle*{2}}
\put(3,5){\circle*{2}}
\put(3,0){\line(0,1){5}}
\put(6,-2){\tiny #1}
\put(6,3){\tiny #2}
\end{picture}}

\newcommand{\tdtroisun}[3]{\begin{picture}(20,12)(-5,-1)
\put(3,0){\circle*{2}}
\put(6,7){\circle*{2}}
\put(0,7){\circle*{2}}
\put(-0.65,0){$\vee$}
\put(5,-2){\tiny #1}
\put(9,5){\tiny #2}
\put(-5,5){\tiny #3}
\end{picture}}

\newcommand{\tdtroisdeux}[3]{\begin{picture}(12,15)(-2,-1)
\put(0,0){\circle*{2}}
\put(0,5){\circle*{2}}
\put(0,10){\circle*{2}}
\put(0,0){\line(0,1){5}}
\put(0,5){\line(0,1){5}}
\put(3,-2){\tiny #1}
\put(3,3){\tiny #2}
\put(3,9){\tiny #3}
\end{picture}}

\newcommand{\ptroisun}{\begin{picture}(15,12)(-5,-1)
\put(3,7){\circle*{2}}
\put(-0.65,0){$\wedge$}
\put(6,0){\circle*{2}}
\put(0,0){\circle*{2}}
\end{picture}}

\newcommand{\pquatresix}{\begin{picture}(15,12)(-5,-1)
\put(0,0){\circle*{2}}
\put(7,0){\circle*{2}}
\put(0,7){\circle*{2}}
\put(7,7){\circle*{2}}
\put(0,0){\line(0,1){7}}
\put(7,0){\line(0,1){7}}
\put(0,1.5){$\scriptstyle \diagdown$}
\end{picture}}


\newcommand{\pdtroisun}[3]{\begin{picture}(20,12)(-5,-1)
\put(3,7){\circle*{2}}
\put(-0.65,0){$\wedge$}
\put(6,0){\circle*{2}}
\put(0,0){\circle*{2}}
\put(5,5){\tiny #1}
\put(-5,-2){\tiny #2}
\put(9,-2){\tiny #3}
\end{picture}}

\newcommand{\pdcinq}[5]{\begin{picture}(20,23)(-5,-1)
\put(3,0){\circle*{2}}
\put(-0.65,0){$\vee$}
\put(6,7){\circle*{2}}
\put(0,7){\circle*{2}}
\put(3,14){\circle*{2}}
\put(3,21){\circle*{2}}
\put(3,14){\line(0,1){7}}
\put(-0.65,7){$\wedge$}
\put(5,-2){\tiny #1}
\put(9,5){\tiny #2}
\put(-5,5){\tiny #3}
\put(5,12){\tiny #4}
\put(5,19){\tiny #5}
\end{picture}}

\input{xy}
\xyoption{all}

\newtheorem{defi}{\indent Definition}
\newtheorem{lemma}[defi]{\indent Lemma}
\newtheorem{cor}[defi]{\indent Corollary}
\newtheorem{theo}[defi]{\indent Theorem}
\newtheorem{prop}[defi]{\indent Proposition}

\newcommand{\sym}{\mathfrak{S}}
\newcommand{\VS}{\mathbf{S}}

\newcommand{\QSym}{\mathbf{QSym}}

\newcommand{\T}{\mathcal{T}}
\newcommand{\TT}{\mathbf{T}}
\newcommand{\SF}{\mathbf{F}}
\newcommand{\CF}{\mathcal{F}}

\newcommand{\F}{\mathcal{F}}

\renewcommand{\S}{\mathfrak{S}}

\newcommand{\N}{\mathbb{N}}

\def\shuff#1#2{\mathbin{
      \hbox{\vbox{\hbox{\vrule \hskip#2 \vrule height#1 width 0pt}\hrule}\vbox{\hbox{\vrule \hskip#2 \vrule height#1 width 0pt\vrule }\hrule}}}}
\def\shuffl{{\mathchoice{\shuff{5pt}{3.5pt}}{\shuff{5pt}{3.5pt}}{\shuff{3pt}{2.6pt}}{\shuff{3pt}{2.6pt}}}}
\def\shuffle{\, \shuffl \,}

\def\<{\langle}
\def\>{\rangle}

\begin{document}

\title[Finite spaces]{Infinitesimal and $B_\infty$-algebras, finite spaces, and quasi-symmetric functions}
\date{}

\author{Lo\"\i c Foissy}
\address{F\'ed\'eration de Recherche Math\'ematique du Nord Pas de Calais FR 2956\\
Universit\'e du Littoral C\^ote d'opale\\ 50, rue Ferdinand Buisson, CS 80699\\ 62228 Calais Cedex, France}
\email{foissy@lmpa.univ-littoral.fr}

\author{Claudia Malvenuto}
\address{Dipartimento di Matematica\\ Sapienza Universit\`a
di Roma\\ P.le A. Moro 5\\ 00185, Roma, Italy}
\email{claudia@mat.uniroma1.it}

\author{Fr\'ed\'eric Patras}
\address{UMR 7351 CNRS\\
        		Universit\'e de Nice\\
        		Parc Valrose\\
        		06108 Nice Cedex 02
        		France}
        		\email {patras@unice.fr}

\begin{abstract}

Finite topological spaces are in bijective correspondence with preorders on finite sets. We undertake their study using combinatorial tools that have been developed to investigate general discrete structures. 
A particular emphasis will be put on recent topological and combinatorial Hopf algebra techniques. We will show that the linear span of finite spaces carries generalized Hopf algebraic
structures that are closely connected with familiar constructions and structures in topology (such as the one of 
cogroups in the category of associative algebras
that has appeared e.g. in the study of loop spaces of suspensions).
The most striking results that we obtain are certainly that the linear span of finite spaces carries the structure of the enveloping algebra of a $B_\infty$--algebra, and that there are natural (Hopf algebraic) morphisms between finite spaces and quasi-symmetric functions.
In the process, we introduce the notion of Schur-Weyl categories in order to describe rigidity theorems for cogroups in the category of associative algebras and related structures, as well as to account for the existence of natural operations (graded permutations) on them.
\end{abstract}
\maketitle

\section{Introduction}

Finite topological spaces, or finite spaces, for short, that is, topologies on finite sets, have a long history, going back at least to P.S. Alexandroff \cite{Alexandroff}. He was the first to investigate, in 1937,
 finite spaces from a combinatorial point of view
and relate them to preordered sets. Indeed, finite spaces happen to be in bijective correspondence with preorders on finite sets
and it is extremely tempting to undertake their study using the combinatorial tools that have been developed to investigate general discrete structures. 
However, quite surprisingly, such an  undertaking does not seem to have taken place so far, and it is the purpose of the present article to do so.

A particular emphasis will be put on recent topological and combinatorial Hopf algebra techniques. We will show that the set of finite spaces carries naturally (generalized) Hopf algebraic
structures that are closely connected with usual topological constructions (such as joins or cup products) and familiar structures in topology (such as the one of 
cogroups in the category of associative algebras, or infinitesimal Hopf algebras, 
that have appeared e.g. in the study of loop spaces of suspensions and the Bott-Samelson theorem \cite{Bott,Berstein}). Let us mention that the operation underlying the Hopf algebra coproduct is less standard and amounts to the ``extraction'' of open subsets out of finite spaces (Definition~\ref{def8}).
The most striking results that we obtain are certainly that, first, the linear span $\mathcal F$ of finite spaces carries the structure of the enveloping algebra of a $B_\infty$--algebra (Theorem~\ref{th9}). Second, that there is a (surjective, structure preserving) Hopf algebra morphism from $\mathcal F$ to the algebra of quasi-symmetric functions (Theorem~\ref{th11}). In the process, we introduce the notion of Schur-Weyl categories to describe rigidity theorems for cocommutative cogroups in the category of associative algebras (or, equivalently, infinitesimal bialgebras) and related structures such as shuffle bialgebras or their  dual bialgebras. Here, rigidity has to be understood in the sense of Livernet \cite{Livernet}: a generalized bialgebraic structure, such as a cogroup in the category of associative algebras, is rigid if it is free as an algebra and cofree as a coalgebra.

Let us point out that operations such as cup products are usually defined ``locally'', that is, inside a chain or cochain algebra associated to a \it given \rm topological space, whereas
the structures we introduce hold ``globally'' over the linear span of all finite spaces. 
Although we will not investigate systematically in the present article this interplay between ``local'' and ``global'' constructions, 
it is certainly one of the interesting phenomena showing up in the study of finite topological spaces.

From the historical perspective, a systematic homotopical investigation of finite spaces did not occur till the mid-60's, with breakthrough contributions by R.E. Stong \cite{Stong} and M.C. McCord \cite{McCord1,McCord2}. These investigations were revived in the early 2000s, among others under the influence of \mbox{P. May}; 
we refer to \cite{BarmakThesis} for details. These studies focussed largely on problems such as reduction methods 
(methods to remove points from finite spaces without changing their strong or weak homotopy type and related questions such as the construction of minimal spaces, 
see e.g. \cite{BarmakMinian, Fieux}), as such they are complementary to the ones undertaken in the present article.

The article is organized as follows: in the next two sections, we review briefly the links between finite spaces and preorders, introduce the $Com-As$ structure on finite spaces and study its properties 
(freeness, involutivity, compatibility with homotopy reduction methods).
Sections 4 and 5 revisit the equivalent notions of free algebras and cofree coalgebras, cocommutative cogroups in the category of associative algebras and infinitesimal bialgebras \cite{Berstein,Loday,Livernet}. We extend in particular results of Livernet and relate these algebras to shuffle bialgebras and their dual bialgebras.
In the following section, we show how these ideas apply to finite spaces, showing in particular that their linear span carries the structure of a cofree connected coalgebra and, more precisely, is the enveloping algebra of a $B_\infty$--algebra.
The last section investigates the links between finite topologies and quasi-symmetric functions.

In the present article, we study ``abstract'' finite spaces, that is, finite spaces up to homeomorphisms: we identify two topologies $\mathcal T$ and ${\mathcal T}'$ on the finite sets $X$ and $Y$ if there exists a set map $f$ from $X$ to $Y$ inducing an isomorphism between $\mathcal T$ and ${\mathcal T}'$. The study of ``decorated'' finite spaces (that is, without taken into account this identification) is interesting for other purposes (e.g. enumerative and purely combinatorial ones). These questions will be the subject of another article \cite{FoissyMalv}. 

All vector spaces and algebraic structures (algebras, coalgebras...) are defined over a field $K$ of arbitrary characteristic. By linear span of a set $X$, we mean the vector space freely generated by $X$ over this ground field.
Unless otherwise stated, the objects we will consider will always be $\N$-graded (shortly, graded) and connected (connectedness meaning as usual that the degree 0 component of a graded vector space is the null vector space or is the ground field for a graded algebra, coalgebra or bialgebra). Because of this hypothesis, the two notions of Hopf algebras and bialgebras will agree (see e.g. \cite{hazewinkel2010algebras}); we will use them equivalently and without further comments.

\ \\

The authors acknowledge support from the grant CARMA ANR-12-BS01-0017. L. Foissy and F. Patras acknowledge visiting support from Sapienza Universit\`a di Roma.

\section{Topologies on finite sets}

\subsection{Notation and definitions}

Let $X$ be a set. Recall that a topology on $X$ is a family $\T$ of subsets of $X$, called the open sets of $\T$, such that:
\begin{enumerate}
\item $\emptyset$, $X \in \T$.
\item The union of an arbitrary number of elements of $\T$ is in $\T$.
\item The intersection of a finite number of elements of $\T$ is in $\T$.
\end{enumerate}

When $X$ is finite, these axioms simplify: a topology on $X$ is a family of subsets containing the empty set and $X$ and 
closed under unions and intersections. 
In particular, the set of complements of open sets (the closed sets for $\T$, which is automatically closed under unions and intersections) 
defines a dual topology $\T^\ast$ as $\T^\ast:=\{F\subset X, \ \exists O\in\T,\ F=X-O\}$. We will write sometimes $\sigma$ for the duality 
involution, $\sigma(\T):=\T^\ast,\ \sigma^2=Id$.

Two topologies $\T$, $\T'$, on finite sets respectively $X$ and $Y$, are homeomorphic if and only if there exists a bijective map $f$ between $X$ and $Y$ 
such that $f_\ast(\T)=\T'$ (where we write $f_\ast$ for the induced map on subsets of $X$ and $Y$).
We call {\it finite spaces} the equivalence classes of finite set topologies under homeomorphisms and write $\overline\T$ for the 
finite space associated to a given topology $\T$ on a finite set $X$. In order to avoid terminological ambiguities, we will a finite set $X$ equipped with a topology a \it finite topological set \rm (instead of finite topological space).

Every finite space $\overline\T$ can be represented by a (non-unique) topology $\overline\T_n$
on the set  $[n]:=\{1,...,n\}$ (in particular, $[0]=\emptyset$); we call $\overline\T_n$ a standard representation of $\overline\T$. 
The duality involution goes over to finite spaces, its action on finite spaces is still written $\sigma$ (or with a $\ast$).

Let us recall now the bijective correspondence between topologies on a finite set $X$ and preorders on $X$ (see \cite{Erne}).
\begin{enumerate}

\item Let $\T$ be a topology on the finite set $X$. The relation $\leq_\T$ on $X$ is defined by
$i\leq_\T j$ if any open set of $\T$ which contains $i$ also contains $j$. Then $\leq_\T$ is a preorder, that is to say a reflexive, transitive relation.
Moreover, the open sets of $\T$ are the ideals of $\leq_\T$, that is to say the sets $I\subseteq X$ such that, for all $i,j \in X$:
$$(i\in I\mbox{ and } i\leq_\T j)\Longrightarrow j\in I.$$
\item Conversely, if $\leq$ is a preorder on $X$, the ideals of $\leq$ form a topology on $X$ denoted by $\T_\leq$.
Moreover, $\leq_{T_\leq}=\leq$, and $\T_{\leq_\T}=\T$. Hence, there is a bijection between the set of topologies on $X$ and the 
set of preorders on $X$. A map between finite topologies (i.e. topologies on finite sets) is continuous if and only if it is preorder-preserving.

\item Let us define for each point $x\in X$ the set $U_x$ to be the minimal open set containing $x$. The $U_x$ form a basis for the topology of $X$ called the minimal basis of $\T$. The preorder that has just been introduced can be equivalently defined by $x\leq_\T y \Leftrightarrow y\in U_x$. Notice that the opposite convention (defining a preorder from a topology using the requirement $ x\in U_y$) would lead to equivalent results.

\item Let $\T$ be a topology on $X$. The relation $\sim_\T$ on $X$, defined by $i \sim_\T j$ if $i\leq_\T j$ and $j\leq_\T i$,
is an equivalence relation on $X$. Moreover, the set $X/\sim_\T$ is partially ordered by the relation defined on the equivalence classes $\overline{i}$ by
$\overline{i}\leq_\T \overline{j}$ if $i\leq_\T j$. Consequently, we shall represent preorders on $X$ (hence, topologies on $X$)
by the Hasse diagram of $X/\sim_\T$, the vertices being the equivalence classes of $\sim_\T$. 
\item Duality between topologies is reflected by the usual duality of preorders: $i\leq_{\T^\ast}j\Leftrightarrow j\leq_{\T}i$.
In particular, the Hasse diagram of $\T^\ast$ is obtained by reversing (turning upside-down) the Hasse diagram of $\T$.
\item A topological space is $T_0$ if it satisfies the separation axiom according to which the relation $\sim$ is trivial (equivalence classes for $\sim$ are singletons, that is,
for any two points $x,y\in X$, there always exists an open set containing only one of them).
At the level of $\leq_\T$ this amounts requiring antisymmetry: the preorder $\leq_\T$ is then a partial order. 
In other terms, finite $T_0$-spaces are in
bijection with isomorphism classes of finite partially ordered sets (posets).
\end{enumerate}

For example, here are the topologies on $[n]$, $n\leq 3$:
$$1=\emptyset\:; \tdun{$1$}; \tdun{$1$}\tdun{$2$},\tddeux{$1$}{$2$},\tddeux{$2$}{$1$},\tdun{$1,2$}\hspace{3mm};$$
$$\tdun{$1$}\tdun{$2$}\tdun{$3$},\tddeux{$1$}{$2$}\tdun{$3$},\tddeux{$1$}{$3$}\tdun{$2$},
\tddeux{$2$}{$1$}\tdun{$3$},\tddeux{$2$}{$3$}\tdun{$1$},\tddeux{$3$}{$1$}\tdun{$2$},$$
$$\tddeux{$3$}{$2$}\tdun{$1$}, \tdtroisun{$1$}{$3$}{$2$},\tdtroisun{$2$}{$3$}{$1$},\tdtroisun{$3$}{$2$}{$1$},
\pdtroisun{$1$}{$2$}{$3$},\pdtroisun{$2$}{$1$}{$3$},\pdtroisun{$3$}{$1$}{$2$},
\tdtroisdeux{$1$}{$2$}{$3$},\tdtroisdeux{$1$}{$3$}{$2$},\tdtroisdeux{$2$}{$1$}{$3$},
\tdtroisdeux{$2$}{$3$}{$1$},\tdtroisdeux{$3$}{$1$}{$2$},\tdtroisdeux{$3$}{$2$}{$1$},$$
$$\tdun{$1,2$}\hspace{3mm} \tdun{$3$},\tdun{$1,3$}\hspace{3mm} \tdun{$2$},\tdun{$2,3$}\hspace{3mm} \tdun{$1$},
\tddeux{$1,2$}{$3$}\hspace{3mm},\tddeux{$1,3$}{$2$}\hspace{3mm},\tddeux{$2,3$}{$1$}\hspace{3mm},
\tddeux{$3$}{$1,2$}\hspace{3mm},\tddeux{$2$}{$1,3$}\hspace{3mm},\tddeux{$1$}{$2,3$}\hspace{3mm},
\tdun{$1,2,3$}\hspace{5mm}.$$

The two topologies on $[3]$, $\tdtroisun{$1$}{$3$}{$2$}$ and $\pdtroisun{$1$}{$2$}{$3$}$,
are dual. 

A finite space will be represented by an unlabelled Hasse diagram.
The cardinalities of the equivalence classes of $\sim_\T$ are indicated on the diagram associated to $\T$ if they are not equal to $1$.
Here are the finite spaces of cardinality $\leq 3$:
$$1=\emptyset;\tun;\tun\tun,\tdeux,\tdun{$2$};\tun\tun\tun,\tdeux\tun,\ttroisun,\ptroisun,\ttroisdeux,\tdun{$2$}\tun,\tddeux{$2$}{},\tddeux{}{$2$},
\tdun{$3$}.$$
The (minimal) finite space realization, up to weak homotopy equivalence, of the circle  and of the 2-dimensional sphere  (see e.g. \cite{Barmak2007})
\begin{displaymath}
\xymatrix@C=4pt{ \bullet \ar@{-}[d] \ar@{-}[dr]  & \bullet \ar@{-}[d] \ar@{-}[dl] &&&&& \bullet \ar@{-}[d] \ar@{-}[dr]  & \bullet \ar@{-}[d] \ar@{-}[dl] \\ 
		\bullet &  \bullet &&&&&  \bullet \ar@{-}[d] \ar@{-}[dr]  & \bullet \ar@{-}[d] \ar@{-}[dl] \\ 
		&&&&&&\bullet &  \bullet } 
\end{displaymath}
are examples of self-dual finite spaces.

The number $t_n$ of topologies on $[n]$ is given by the sequence A000798 in \cite{Sloane}:
{\small{
$$\begin{array}{c|c|c|c|c|c|c|c|c|c|c}
n&1&2&3&4&5&6&7&8&9&10\\
\hline t_n&1&4&29&355&6\:942&209527&9\:535\:241&642\:779\:354&63\:260\:289\:423
&8\:977\:053\:873\:043
\end{array}$$}}
The set of topologies on $[n]$ will be denoted by $\TT_n$, and we put $\displaystyle \TT=\bigsqcup_{n\geq 0} \TT_n$. \\

The number $f_n$ of finite spaces with $n$ elements is given by the sequence A001930 in \cite{Sloane}:
$$\begin{array}{c|c|c|c|c|c|c|c|c|c|c}
n&1&2&3&4&5&6&7&8&9&10\\
\hline f_n&1&3&9&33&139&718&4\:535& 35\:979& 363\:083&4\:717\:687
\end{array}$$
The set of finite spaces with $n$ elements will be denoted by $\SF_n$, and we put $\displaystyle \SF=\bigsqcup_{n\geq 0} \SF_n$. 
The vector space with basis given by the set of all finite spaces is written $\CF$ and its (finite dimensional) degree $n$ component, the subspace generated by finite spaces with $n$ elements, $\CF_n$. We will be from now on interested in the fine structure of $\CF$ in relation to classical topological properties and constructions. 

\subsection{Homotopy types}
The present section and the following survey the links between finite spaces and topological notions such as homotopy types. We refer to Stong's seminal paper \cite{Stong} and to Barmak's thesis \cite{BarmakThesis} on which this account is based for further details and references.

For a finite topological set, the three notions of connectedness, path-connectedness and order-connectedness agree (the later being understood as connectedness of the graph of the associated preorder).

For $f,g$ continuous maps between finite topological sets $X$ and $Y$, we set
$$f\leq g\Leftrightarrow \forall x\in X, \ f(x)\leq g(x) \mbox{\rm \ for the order of } Y.$$ This preorder on the (finite) mapping space $Y^X$ is the one associated to the compact-open topology. It follows immediately, among others, that two comparable maps are homotopic and that a space with a maximal or minimal element is contractible (since the constant map to this point will be homotopic to any other map -- in particular the identity map).

For the same reason, given a finite topological set $X$, there exists a homotopy equivalent finite topological set $X_0$ which is $T_0$ (the quotient space $X/\sim_\T$ considered in the previous section, for example).
Therefore, since \cite{Alexandroff},
the study of homotopy types of finite spaces is in general restricted to $T_0$ spaces. 
Characterizing homotopies (inside the category of finite topological sets) is also a simple task: 
two maps $f$ and $g$ are homotopic if and only if there exists a sequence:
$$f=f_0\leq f_1\geq f_2 \leq .... \geq f_n=g.$$

In the framework of finite topological sets, a reduction method refers to a combinatorial method allowing the removal of points
without changing given topological properties (such as the homotopy type).
Stong's reduction method allows a simple and effective construction of representatives of finite homotopy types \cite{Stong}. Stong first defines the
notions of linear and colinear points (also called up beat points and down beat points in a later terminology): a point $x\in X$ is linear
if $\exists y\in X, y>x$ and $\forall z>x, z\geq y$.
Similarly, $x\in X$ is colinear if $\exists y\in X, y<x$ and $\forall z<x, z\leq y$.
It follows from the combinatorial characterization of homotopies 
 that, if $x$ is a linear or colinear point in $X$, then $X$ is homotopy equivalent to $X-\{x\}$.

Together with the fact that any finite topological set is homotopy equivalent to a $T_0$ space, the characterization of homotopy types follows.
A space is called a core (or minimal finite space) if it has no linear or colinear points. By reduction to a $T_0$ space and recursive elimination of linear and colinear points, any finite topological set $X$ is homotopy equivalent to a core $X_c$ that can be shown to be unique up to homeomorphism \cite[Thm. 4]{Stong}.

\subsection{Simplicial realizations}

Another important tool to investigate topologically finite spaces is through their connection with simplicial complexes. 
We survey briefly the results of McCord, following \cite{McCord2,BarmakThesis}.

Recall that a weak homotopy equivalence between two topological spaces $X$ and $Y$
is a continuous map $f:X\rightarrow Y$ such that for all $x\in X$ and all $i\geq 0$, the induced map $f_\ast:\pi_i(X,x)\longmapsto \pi_i(Y,f(x))$ is an isomorphism (of groups for $i>0$).
The finiteness requirement enforces specific properties of finite spaces: for example, contrary to what happens for CW-complexes (Whitehead's theorem), there are weakly homotopy
equivalent finite spaces with different homotopy types.

The key to McCord's theory is the definition of functors between the categories of finite topological sets and simplicial complexes (essentially the categorical nerve and the topological realization). Concretely, to a finite topological set $X$ is associated the simplicial complex ${\mathcal K}(X)$ of non empty chains of $X/\sim_\T$ (that is, sequences $x_1<...<x_n$ in $X/\sim_\T$). Conversely, to the simplicial complex ${\mathcal K}(X)$ is associated its topological realization $|{\mathcal K}(X)|$: the points $x$ of $|{\mathcal K}(X)|$ are the linear combinations $x=t_1x_1+...+t_nx_n,\ \sum\limits_{i=1}^nt_i=1, \ t_i>0$. Setting $Sup(x):=x_1$, McCord's fundamental theorem states that:
$$Sup:|{\mathcal K}(X)|\longmapsto X/\sim_\T$$ 
is a weak homotopy equivalence. In particular, $|{\mathcal K}(X)|$ is weakly homotopy equivalent to $X$. Notice also that ${\mathcal K}(X)$ and ${\mathcal K}(X^\ast)$, resp. $|{\mathcal K}(X)|$ and $|{\mathcal K}(X^\ast)|$ are canonically isomorphic: a finite space is always weakly homotopy equivalent to its dual.

\section{Sums and joins}

We investigate from now on operations on finite spaces. Besides their intrinsic interest and their connections to various classical topological constructions, they are meaningful for the problem of enumerating finite spaces (see e.g. \cite{Stanley,Sharp,Erne}). They will also later underly the construction of  $B_\infty$-algebra structures.\\

{\bf Notation.} Let $O\subseteq \N$ and let $n \in \N$. The set $O(+n)$ is the set $\{k+n\mid k\in O\}$.

\begin{defi}
Let $\T\in \TT_n$ and $\T'\in \TT_{n'}$ be standard representatives of $\overline\T\in\SF_n$ and $\overline{\T'}\in\SF_{n'}$.
\begin{enumerate}
\item The topology $\T.\T'$ is the topology on $[n+n']$ for which open sets are the sets $O \sqcup O'(+n)$, with $O \in \T$ and $O'\in \T'$. The finite space $\overline\T.\overline{\T'}$ is  $\overline{\T.\T'}.$
\item The topology $\T \succ \T'$ is the topology on $[n+n']$ for which open sets are the sets $O \sqcup [n'](+n)$, with $O \in \T$,
and $O'(+n)$, with $O'\in \T'$. The finite space $\overline\T\succ\overline{\T'}$ is $\overline{\T \succ \T'}.$
\end{enumerate}\end{defi}

We omit the proof that the products  $\overline\T.\overline{\T'}$ and $\overline\T\succ\overline{\T'}$ are well-defined and do not depend on the choice of a standard representative.

The first product is the sum (disjoint union) of topological spaces. 

The second one deserves to be called the join. 
Recall indeed that the join $A\ast B$ of two topological spaces $A$ and $B$ is the quotient of $[0,1]\times A\times B$ by the relations $(0,a,b)\sim (0,a,b')$ and $(1,a,b)\sim (1,a',b)$. For example, the join of the $n$ and $m$ dimensional spheres is the $n+m+1$-dimensional sphere. When it is defined that way, the join is not an internal operation on finite spaces. However, recall that
the join of two simplicial complexes $K$ and $L$ is the simplicial complex $K\ast L:=K\coprod L\coprod \{\sigma\cup\beta,\ \sigma\in K,\ \beta\in L\}$ and that the join operation commutes with topological realizations in the sense that (up to canonical isomorphisms) $|K\ast L|=|K|\ast |L|$.
It follows therefore from McCord's theory that, up to a weak homotopy equivalence, the product $\succ$ is nothing but (a finite spaces version of) the topological join.\\

We extend linearly the two products defined earlier to $\F$. 
Note that these products define linear maps from $\F_m \times \F_n$ to $\F_{m+n}$ for all $m,n \geq 0$. \\

By a slight abuse of notation, we will allow ourselves to denote finite spaces using the notation ($X$, $Y$,...) that was reserved till now for finite topological sets. Similarly, we will not discuss systematically the fact that some constructions on finite spaces have to be done by choosing finite topological sets representatives of the corresponding finite spaces, performing the construction on these representatives, and then moving back to the corresponding homeomorphism classes.  \\

{\bf Examples.} 
$$\tdtroisun{$1$}{$3$}{$2$}.\tddeux{$2$}{$1$}=\tdtroisun{$1$}{$3$}{$2$}\tddeux{$5$}{$4$},\:
\tdtroisun{$1$}{$3$}{$2$}\succ \tddeux{$2$}{$1$}=\pdcinq{$1$}{$3$}{$2$}{$5$}{$4$}.$$

The join of two circles (see in the previous section the minimal finite space representation of a circle) is a 3-sphere:
\begin{displaymath}
\xymatrix@C=4pt{  \bullet \ar@{-}[d] \ar@{-}[dr]  & \bullet \ar@{-}[d] \ar@{-}[dl] \\ 
		  \bullet \ar@{-}[d] \ar@{-}[dr]  & \bullet \ar@{-}[d] \ar@{-}[dl] \\ 
		  \bullet \ar@{-}[d] \ar@{-}[dr]  & \bullet \ar@{-}[d] \ar@{-}[dl] \\ 
		\bullet &  \bullet } 
\end{displaymath}
\begin{prop} 
These two products are associative, with $\emptyset=1$ as a common unit. The first product is also commutative. They are compatible with the duality involution:
$$X^\ast .Y^\ast =(X.Y)^\ast,\ Y^\ast \succ X^\ast =(X\succ Y)^\ast.$$
\end{prop}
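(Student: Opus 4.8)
The plan is to unwind the two definitions at the level of standard representatives and reduce every assertion to elementary manipulations of subsets of $[n]$, keeping careful track of the shift operation $O\mapsto O(+n)$. Throughout, fix standard representatives $\T\in\TT_n$, $\T'\in\TT_{n'}$, $\T''\in\TT_{n''}$ of three finite spaces $X,Y,Z$. The one point that requires attention is to distinguish when two such constructions produce \emph{literally} the same topology on $[n+n'+\cdots]$ and when they only produce \emph{homeomorphic} ones, related by the order-preserving ``block transposition'' bijection exchanging two consecutive blocks of $[N]$; since finite spaces are homeomorphism classes, the latter is all we need.

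For associativity of the sum I would observe that both $(\T.\T').\T''$ and $\T.(\T'.\T'')$ have as open sets exactly the $O\sqcup O'(+n)\sqcup O''(+n{+}n')$ with $O\in\T$, $O'\in\T'$, $O''\in\T''$, using $(O'\sqcup O''(+n'))(+n)=O'(+n)\sqcup O''(+n{+}n')$; so they are literally equal. For associativity of the join the key is the set identity $[n'](+n)\sqcup[n''](+n{+}n')=[n'{+}n''](+n)$: with it one checks that both $(\T\succ\T')\succ\T''$ and $\T\succ(\T'\succ\T'')$ have as open sets precisely the three families $\{O\sqcup[n'{+}n''](+n):O\in\T\}$, $\{O'(+n)\sqcup[n''](+n{+}n'):O'\in\T'\}$ and $\{O''(+n{+}n'):O''\in\T''\}$, so again they coincide literally. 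For the unit, the empty topology on $[0]=\emptyset$ is neutral on both sides, literally: $\emptyset.\T$ has open sets $\emptyset\sqcup O(+0)=O$, while $\emptyset\succ\T$ has open sets $\{[n]\}\cup\{O:O\in\T\}=\T$, and symmetrically on the right.

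Commutativity of the sum does not hold literally, since $\T.\T'$ and $\T'.\T$ are the topologies on $[n+n']$ with open sets $O\sqcup O'(+n)$, respectively $O'\sqcup O(+n')$; but the block transposition $[n+n']\to[n'+n]$ carries one to the other, so $\overline{\T.\T'}=\overline{\T'.\T}$ as finite spaces.

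For compatibility with $\sigma$ I would compute complements in the ambient set. Since $[n+n']=[n]\sqcup[n'](+n)$, the complement of $O\sqcup O'(+n)$ is $([n]-O)\sqcup([n']-O')(+n)$, which shows $\T^\ast.\T'^\ast=(\T.\T')^\ast$ on the nose, hence $X^\ast.Y^\ast=(X.Y)^\ast$. For the join, the complement of $O\sqcup[n'](+n)$ is $[n]-O$ and the complement of $O'(+n)$ is $[n]\sqcup([n']-O')(+n)$, so the open sets of $(\T\succ\T')^\ast$ are $\{F:F\in\T^\ast\}\cup\{[n]\sqcup F'(+n):F'\in\T'^\ast\}$; comparing with the definition of $\T'^\ast\succ\T^\ast$ on $[n'+n]$ (open sets $F'\sqcup[n](+n')$ for $F'\in\T'^\ast$ and $F(+n')$ for $F\in\T^\ast$) one sees the two topologies are exchanged by the block transposition, giving $(X\succ Y)^\ast=Y^\ast\succ X^\ast$. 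I expect this last computation to be the only step needing real care: passing to complements turns the ``lower'' factor of $\succ$ into the ``upper'' one, which is exactly the reversal of factors recorded in the statement; everything else is routine bookkeeping of the shifts.
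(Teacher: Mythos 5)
Your proof is correct; the paper leaves this verification to the reader, and your direct unwinding of the definitions (tracking the shifts, noting which identities hold literally and which only up to the block-transposition homeomorphism, and the key observation that complementation swaps the two factors of $\succ$) is exactly the intended argument.
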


The proof is left to the reader.

\begin{defi}
Let $X \in \SF$, different from $1$. Notice that $X$ is connected (or $.$-indecomposable) if and only if it cannot be written in the form $X=X'.X''$, with $X',X''\neq 1$.
\begin{enumerate}
\item We shall say that $X$ is join-indecomposable if it cannot be written in the form $X=X'\succ X''$, with $X',X''\neq 1$.
\item We shall say that $X$ is irreducible if it is both join-indecomposable and connected.
\end{enumerate} \end{defi}

{\bf Examples.} Here are the irreducible spaces of cardinality $\leq 4$:
$$\tun; \tdun{$2$}; \tdun{$3$}; \tdun{$4$}, \pquatresix.$$

The triple $(\F,.,\succ)$ is a $Com-As$ algebra, that is an algebra with a first commutative and associative product and a second, associative, product sharing the same unit. This is a particular example of a 2-associative algebra \cite{Loday}, that is to say an algebra with two associative products
sharing the same unit.

From now on, unless otherwise stated, space means finite space.

\begin{theo}\begin{enumerate}
\item The commutative algebra $(\F,.)$ is freely generated by the set of connected spaces.
\item The associative algebra $(\F,\succ)$ is freely generated by the set of join-indecomposable spaces.
\item The $Com-As$ algebra $(\F,.,\succ)$ is freely generated by the set of irreducible spaces.
\end{enumerate}\end{theo}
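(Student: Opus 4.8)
The three statements have the same flavour: each says that a certain free algebraic structure, generated by the obvious ``indecomposables'', is realized by $(\F,\cdot)$, $(\F,\succ)$, or $(\F,\cdot,\succ)$. For all three the strategy is the same: exhibit, for each finite space $X\neq 1$, a \emph{unique} decomposition into indecomposables, and then argue that uniqueness of the decomposition is equivalent to freeness. Concretely, for (1) I would show that every finite space $X$ factors uniquely (up to order of factors) as a product $X=X_1\cdots X_k$ of connected spaces; since $\cdot$ is commutative and associative, this says precisely that the monomials in connected spaces form a basis of $\F$, i.e. $(\F,\cdot)$ is the free commutative algebra on the set of connected spaces. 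For (2), similarly, every $X$ factors uniquely (now as an \emph{ordered} list, since $\succ$ is noncommutative) as $X=X_1\succ\cdots\succ X_k$ with each $X_i$ join-indecomposable, giving freeness of the associative algebra $(\F,\succ)$. For (3), the $Com$-$As$ case, I would combine the two: a basis of the free $Com$-$As$ algebra on a set $S$ is indexed by commutative monomials in ``$\succ$-words'' on $S$ (equivalently, multisets of nonempty lists of elements of $S$), and I must show finite spaces are in bijection with multisets of nonempty lists of irreducible spaces, via: first split $X$ into connected components, then split each connected component along $\succ$.

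\textbf{Key steps, in order.} First, I establish the \emph{connected-component decomposition}: on the preorder side, $X=X'\cdot X''$ corresponds to a partition of the underlying preorder into two nonempty pieces with no relations between them, so the $\cdot$-indecomposable (= connected) spaces are exactly the connected preorders, and every finite preorder is the disjoint union of its connected components in a unique way. This immediately gives a bijection between finite spaces and finite multisets of connected spaces, hence (1). Second, I analyze the $\succ$ product at the preorder level: unwinding Definition~2 (the ``join''), $X=X'\succ X''$ means the underlying poset of $X$ (on equivalence classes) decomposes as an ordinal sum $X'\oplus X''$ where \emph{every} element of the $X'$-part lies below \emph{every} element of the $X''$-part — more precisely one must check what $\succ$ does to the $\sim_\T$-classes, but the upshot is that $\succ$-decompositions of $X$ correspond to ``cut points'' of the ordinal-sum structure. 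A finite poset has a canonical finest ordinal-sum decomposition into ordinal-sum-indecomposable (= join-indecomposable) pieces, and this decomposition is unique and totally ordered; this yields (2). Third, for (3): take $X$, decompose into connected components $X=X_1\cdots X_k$ (unique multiset), then decompose each connected $X_i$ as $X_i = X_{i,1}\succ\cdots\succ X_{i,\ell_i}$ into join-indecomposable pieces (unique list); since each $X_{i,j}$ is a factor of something connected I still need to confirm each $X_{i,j}$ is itself connected, hence irreducible — this follows because $\succ$ of any two nonempty spaces is connected and, conversely, a $\succ$-factor of a connected space that is join-indecomposable cannot be disconnected without contradicting minimality of the decomposition (or one checks directly that the ordinal-sum pieces of a connected poset are connected). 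The resulting data — a multiset of nonempty lists of irreducibles — is exactly a basis element of the free $Com$-$As$ algebra on the irreducibles, and the map is a bijection, proving (3).

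\textbf{Main obstacle.} The routine parts are the uniqueness of connected-component decomposition and the passage ``unique decomposition into indecomposables $\Leftrightarrow$ freeness'' (standard once one knows a monomial basis of the relevant free object; for $Com$-$As$ this monomial basis is the content of Loday's description of the free 2-associative / $Com$-$As$ algebra, which I may cite). The delicate step is the precise translation of the $\succ$ product into a statement about preorders and the verification that join-indecomposable spaces are exactly the ordinal-sum-indecomposable posets, \emph{together with} the correct bookkeeping of the $\sim_\T$-equivalence classes (the multiplicities decorating the Hasse diagram): I must make sure that $\succ$ interacts correctly with non-$T_0$ spaces, i.e. that a class of size $>1$ cannot be ``split'' by a $\succ$-cut, and that the canonical ordinal-sum decomposition of the quotient poset lifts to a canonical $\succ$-decomposition of $X$ itself. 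Checking that this lift is well-defined and bijective — and in particular that each factor in the finest $\succ$-decomposition of a connected space is again connected, so ``irreducible'' is not an overdetermined condition — is where the real work lies; the rest is bookkeeping.
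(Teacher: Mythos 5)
Parts (1) and (2) of your plan are fine and essentially match the paper: (1) is the unique decomposition into connected components, and for (2) your appeal to the canonical finest ordinal-sum decomposition of a finite preorder is equivalent to the paper's direct uniqueness argument (the paper intersects the leading factors of two putative decompositions and derives a contradiction; you cite the standard poset fact instead). Your attention to the $\sim_\T$-classes is warranted but unproblematic: a class $\{x,y\}$ with $x\leq y\leq x$ can never be separated by a $\succ$-cut.

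Part (3), however, has a genuine gap, and it is located exactly where you did \emph{not} expect it. Your description of the free $Com$-$As$ algebra on $S$ as having basis the multisets of nonempty lists of elements of $S$ is wrong: that would only account for expressions with at most two alternations of the products, whereas the free $Com$-$As$ algebra contains arbitrarily deep alternations such as $\bigl((a\cdot b)\succ c\bigr)\cdot d$. The correct basis is the recursively defined set of alternating $\cdot$/$\succ$ expressions (Loday's description, reproduced in the paper as the sets $B_{C,n}$ and $B_{A,n}$). Correspondingly, your claim that each join-indecomposable factor of a connected space is itself connected is false. Concretely, the space $\ptroisun$ (two incomparable minimal points below one maximal point) is connected and decomposes as $(\tun\,\tun)\succ\tun$; the factor $\tun\,\tun$ is join-indecomposable (a join of two nonempty spaces is always connected, so a disconnected space can never be a join) but it is disconnected, hence not irreducible. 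Your proposed ``multiset of lists of irreducibles'' bijection therefore fails already at cardinality $3$: with $r_1=r_2=r_3=1$ irreducibles it produces only $7$ basis elements, whereas $f_3=9$; the two missing spaces are precisely $(\tun\,\tun)\succ\tun$ and $\tun\succ(\tun\,\tun)$. The fix is the recursive trichotomy used in the paper: a space is either irreducible, or $\cdot$-indecomposable and uniquely a $\succ$-product of join-indecomposables (which may be disconnected), or connected-decomposable and uniquely a $\cdot$-product of connected spaces (which may be join-decomposable), and one matches this recursion against the alternating basis of the free $Com$-$As$ algebra by induction on cardinality.
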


\begin{proof} Notice first the important property that the join-product of two non empty spaces is a connected space.

1. Any space can be written uniquely as a disjoint union of connected spaces.

2. Let $X$ be an arbitrary space, and let us choose one of its representatives (still written $X$).
Notice first that $X=Y\succ Z$ if and only if $Y<_\T Z$ (in the sense that, for arbitrary $y\in Y,\ z\in Z$, $y<_T z$).
That is, $$X=Y\succ Z\Leftrightarrow X=Y\coprod Z \textrm{\ \ and\ \ } Y<_\T Z.$$

Let us assume that $X=X_1\succ X_2 \succ ...\succ X_n=Y_1\succ Y_2\succ ...\succ Y_m$ with the $X_i$ and the $Y_j$ join-indecomposable. Then, $X_1\cap Y_1$ is not empty (this would imply for example that $Y_1\subset X_2\succ ...\succ X_n>_\T X_1$, and similarly $X_1 >_\T Y_1$, which leads immediately to a contradiction). Moreover, $X_1\cap Y_1<_T X_1\cap (Y_2\succ ...\succ Y_m)$. A contradiction follows if $X_1\cap Y_1\not= X_1,Y_1$ since we would then have
$$X_1=(X_1\cap Y_1)\succ ( X_1\cap (Y_2\succ ...\succ Y_m)) .$$
We get $X_1=Y_1$ and $X_2 \succ ...\succ X_n= Y_2\succ ...\succ Y_m$, and the statement follows by induction.

3. Let us describe briefly the free $Com-As$ algebra $CA(S)$ over a set $S$ of generators (we write $.$ and $\succ$ for the two products). A basis $B=S\coprod B_C\coprod B_A$ of $CA(S)$ with  $B_C=\coprod\limits_{n\geq 2}B_{C,n}$,  $B_A=\coprod\limits_{n\geq 2}B_{A,n}$ can be constructed recursively as follows (in the following $B_{A,1}=B_{C,1}:=S$ and the $.$ product is commutative so that $a.b=b.a$):
\begin{itemize}
 \item $B_{C,n}:=\coprod\limits_{n_1+...+n_k=n}\{a_1. \ ...\ .a_k,\ a_i\in B_{A,n_i}\}$
 \item $B_{A,n}:=\coprod\limits_{n_1+...+n_k=n}\{a_1\succ \ ...\ \succ a_k,\ a_i\in B_{C,n_i}\}.$
\end{itemize}

Now, let $X$ be a space, then one and only one of the three following cases holds
\begin{enumerate}
 \item $X$ is irreducible.
 \item $X$ is $.$-indecomposable and join-decomposable, and then it decomposes uniquely into a product $X=X_1\succ ...\succ X_k$ of join-indecomposable spaces.
 \item $X$ is $.$-decomposable and join-indecomposable, and then it decomposes uniquely into a sum $X=X_1\cup ...\cup X_k$ of connected spaces.
\end{enumerate}
It follows by induction that the set of spaces identifies with the basis of the free $Com-As$ algebra over irreducible spaces: writing $S$ for the latter set, the first case in the previous list corresponds to the case $X\in S$; the second to $X\in B_{A}$ with the $X_i$ in $B_C$ or $S$; the third to $X \in B_{C}$ with the $X_i$ in $B_A$ or $S$.
\end{proof}

As an application, it is possible to obtain the numbers $p_n$, $q_n$ and $r_n$ of, respectively, connected, join-indecomposable and irreducible spaces
of cardinality $n$, by manipulating formal series. This gives:

$$\begin{array}{c|c|c|c|c|c|c|c|c|c|c}
n&1&2&3&4&5&6&7&8&9&10\\
\hline p_n&1&2&6&21&94&512&3\:485&29\:515&314\:474&4\:255\:727\\
\hline q_n&1&2&4&14&62&373&2\:722&24\:591&275\:056&3\:860\:200\\
\hline r_n&1&1&1&2&17&167&1\:672&18\:127&226\:447&3\:398\:240
\end{array}$$

The sequences $(p_n)$, $(q_n)$ and  $(r_n)$ are A001928,  A046911 and A046909 of \cite{Sloane}, see also \cite{Wright}.

\section{Schur-Weyl categories}

We will show, in forthcoming sections, that $\F$ carries various bialgebraic structures. Rigidity theorems in the sense of \cite{Livernet} apply, making $\F$ a cofree Hopf algebra and the $B_\infty$-enveloping algebra of a $B_\infty$-algebra (see Theorem~\ref{th9}). 

There are various ways to give an algebraic and combinatorial characterization of cofree Hopf algebras, following ideas that are scattered in the litterature and seem to originate in the Bott-Samelson theorem, according to which $H_\ast(\Omega\Sigma X;K)=T^c(H_\ast (X;K))$, where $\Sigma$ is the suspension functor acting on topological spaces and $\Omega$ the loop space functor, and in the work of Baues on the bar/cobar construction \cite{baues1981double}, \cite[p. 48]{getzler1994operads}. The paper \cite{Loday} addresses the problem explicitly, but other approaches follow from \cite{Berstein,fresse1998algebre,patras1999leray,Livernet}, and no unified treatment seems to have been given up to date. 
We take the opportunity of the present article and the existence of such structures on finite spaces to present such a short and self-contained treatment. In the process, we extend the results of Livernet \cite{Livernet} on cocommutative cogroups in the category of associative algebras
and infinitesimal bialgebras and our study in \cite{foissy2012natural} of natural operations on shuffle bialgebras. 

In the present section, we focus on free associative algebras and cofree coassociative coalgebras. Recall that we work with graded connected structures: in the following, $Vect$ stands for the category of connected graded vector spaces $V=\bigoplus\limits_{n\in\N}  V_n$, where $V_0={0}$ is the null vector space;
if $V$ and $W$ are two objects of $Vect$, a morphism $f:V\longrightarrow W$ in $Vect$ is a linear map, homogeneous of degree $0$, that is to say
$f(V_n) \subseteq W_n$ for a	ll $n\geq 0$. 
 We write $|v|=n$ if $v$ is a (non-zero) homogeneous element of degree $n$ in $V$. The category of connected graded vector spaces augmented with the ground field $K$ in degree $0$ will be written $Vect^+$ (i.e., for $V=\bigoplus\limits_{n\in\N}  V_n\in Vect^+, \ V_0=K$); if $V$ and $W$ are two objects in 
 $Vect^+$, a morphism $f:V\longrightarrow W$ is a linear map, homogeneous of degree $0$, such that $f_{\mid K}=Id_K$.
We write $\epsilon$ for the canonical projection to $V_0=K$, 
 $T(V) =\bigoplus\limits_{n\in\N}T^n(V):=\bigoplus\limits_{n\in\N}V^{\otimes n}\in Vect^+$, $\overline{T}(V):= \bigoplus\limits_{n\in\N^\ast}V^{\otimes n}\in Vect$ and call $T(V)$ the tensor space over $V$ (resp. $T^n(V)$ the space of tensors of length $n$ over $V$).
We use the shortcut notation $v_1 ... v_n$ for $v_1\otimes ...\otimes v_n\in V^{\otimes n}$ and will call sometimes $v_1 ... v_n$ a word (of  length $n$) over $V$.
Notice that the grading of $T(V)$ by the length differs from the grading $T(V) =\bigoplus\limits_{n\in\N}T_n(V)$ canonically induced by the grading of $V$ ($T_n(V)$ being generated by words $v_1...v_k$ where the $v_i$s are homogeneous with $|v_1|+...+|v_k|=n$). 

We are now in the position to recall the definition of the algebra of graded permutations. This algebra plays the role, for the categories of bialgebras that we are going to study, that the descent algebra plays for usual bialgebras (graded connected commutative or cocommutative bialgebras), see \cite{patras1994algebre}. This point of view was developed in \cite{foissy2012natural} for shuffle bialgebras and is extended here to other, naturally equivalent, categories. We write $\sym_k$ for the symmetric group on $k$ elements.

\begin{defi}\label{gradperm}
Let us fix $k \in \mathbb{N}$. Let $\sigma \in \sym_k$ and $d:[k]\longrightarrow \mathbb{N}_{>0}$.
We define a linear endomorphism of $T(V)$ by:
$$\Phi_{(\sigma,d)}:\left\{\begin{array}{rcl}
v_1\ldots v_l&\longrightarrow&v_{\sigma(1)}\ldots v_{\sigma(l)} \mbox{ if }k=l \mbox{ and }|v_{\sigma(i)}|=d(i) \mbox{ for all }i,\\
&\longrightarrow&0\mbox{ if not.}
\end{array}\right.$$
\end{defi}

The composition of graded permutations is given as follows: for all $(\sigma,d) \in \sym_k \times Hom([k],\mathbb{N}_{>0})$ and $(\tau,e) \in \sym_l \times Hom([l],\mathbb{N}_{>0})$,
$$\Phi(\sigma,d)\circ \Phi(\tau,e)=\left\{\begin{array}{l}
\Phi(\tau \circ \sigma,d)\mbox{ if }k=l \mbox{ and }d=e\circ \sigma,\\
0\mbox{ if not.}
\end{array}\right.$$

{\bf Notations.}
We put $\displaystyle \S=\coprod_{k\geq 0} \sym_k \times Hom([k],\mathbb{N}_{>0})$, and $\VS=Vect(\S)$, the vector space generated by $\S$.
The composition of $\S$, linearly extended, makes $\VS$ an algebra. 

Following the method used in \cite{NPT} to study nonlinear Schur-Weyl duality, we want to characterize natural
transformations of the functor
\begin{equation}
T(V):=\bigoplus\limits_{n\in\N}T_n(V):=\bigoplus\limits_{n\in\N}V^{\otimes n}
\end{equation}
viewed as a functor from $Vect$ to $Vect^+$. Concretely, we look for $V\in Vect$-indexed families of graded linear maps $\mu_V$ from $T_n(V)$
to $ T_n(V)$ where $n$ is an arbitrary integer such that, for any map $f$ of graded vector spaces from $V$
to $W$,
\begin{equation}
\label{natu}
T_n(f)\circ \mu_V=\mu_W\circ T_n(f).
\end{equation}
Let us say that such a family $\mu_V$ satisfies graded Schur-Weyl duality in degree $n$ (by extension of the classical case, where the ground field is of characteristic $0$ and the problem is restricted to invertible endomorphisms $f$ of a given non graded vector space $V$).

\begin{prop}\label{schurweyl}
Let $SW$ be the vector space spanned by families of linear maps that satisfy
the graded Schur-Weyl duality in degree $n$, where $n$ runs over $\N^\ast$. The $SW$ is canonically isomorphic to $\VS$.
Moroever, the composition of natural transformations makes $SW$ an algebra, and the canonical isomorphism from $SW$ to $\VS$ is an algebra isomorphism.
\end{prop}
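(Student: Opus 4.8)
The plan is to exhibit the isomorphism $\VS \to SW$ explicitly by sending a generator $(\sigma,d)\in\sym_k\times\mathrm{Hom}([k],\N_{>0})$ to the family $\{\Phi_{(\sigma,d),V}\}_V$ of graded endomorphisms of $T(V)$ from Definition~\ref{gradperm}, and then to prove that this map is well-defined (lands in $SW$), injective, surjective, and compatible with composition. First I would check naturality: for a morphism $f:V\to W$ in $Vect$ and a word $v_1\ldots v_l$, both $T_n(f)\circ\Phi_{(\sigma,d),V}$ and $\Phi_{(\sigma,d),W}\circ T_n(f)$ send $v_1\ldots v_l$ to $f(v_{\sigma(1)})\ldots f(v_{\sigma(l)})$ when $l=k$ and $|v_{\sigma(i)}|=d(i)$ for all $i$ (using that $f$ is homogeneous of degree $0$, so $|f(v_{\sigma(i)})|=|v_{\sigma(i)}|$), and to $0$ otherwise; hence \eqref{natu} holds and each $\Phi_{(\sigma,d)}$ defines an element of $SW$. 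Linearly extending gives an algebra-compatible candidate map $\VS\to SW$ once composition is checked, which is exactly the formula stated for $\Phi(\sigma,d)\circ\Phi(\tau,e)$: composing the two endomorphisms, a word survives only if it has length $k=l$, the degree profile read off after applying $\tau$ is $e$, and the degree profile after applying $\sigma\circ\tau$ must be $d$, i.e. $d=e\circ\sigma$; when this holds the composite is $v\mapsto v_{\tau(\sigma(1))}\ldots$, i.e. $\Phi(\tau\circ\sigma,d)$. (One must be mildly careful with the left-to-right convention for composing permutations so that $\tau\circ\sigma$ really is the correct index; this is bookkeeping, not substance.)

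The heart of the argument is that every family $\mu=\{\mu_V\}$ satisfying graded Schur-Weyl duality in all degrees $n\in\N^\ast$ is a (unique) linear combination of the $\Phi_{(\sigma,d)}$. I would argue this degree by degree and length by length. Fix $n$; since $\mu_V$ preserves the grading by total degree and also, by a standard argument, the grading by length (apply naturality to the inclusion of the length-$l$ part, or observe that $T_l(V)$ is a natural subfunctor cut out by an idempotent), it suffices to understand $\mu$ on $T_l(V)\cap T_n(V)$ for each $l$. The key test object is the free graded vector space $V=V^{(d)}$ on one generator $x_i$ in each degree $d(i)$, $i=1,\dots,l$, for a chosen composition $d:[l]\to\N_{>0}$ with $\sum d(i)=n$: the word $x_1\ldots x_l$ spans a one-dimensional piece of $T_l(V^{(d)})_n$ consisting of words of a given ``letter-degree-type,'' and $\mu_{V^{(d)}}(x_1\ldots x_l)$ must be a linear combination of the words $x_{\sigma(1)}\ldots x_{\sigma(l)}$ over those $\sigma\in\sym_l$ whose composed degree profile is compatible. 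Reading off these scalars defines a candidate element $\xi\in\VS$; naturality applied to arbitrary maps $V\to V^{(d)}$ (hitting any homogeneous word) then forces $\mu_V=\sum$ of the corresponding $\Phi_{(\sigma,d)}$ on all of $T(V)$, which simultaneously gives surjectivity onto $SW$. Injectivity is the easy direction: evaluating $\sum_{(\sigma,d)}\lambda_{(\sigma,d)}\Phi_{(\sigma,d)}$ on the distinguished words $x_1\ldots x_l$ in the spaces $V^{(d)}$ separates all the $\Phi_{(\sigma,d)}$, since distinct $(\sigma,d)$ with the same $d$ give linearly independent output words and distinct $d$'s act on disjoint test objects.

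The main obstacle I anticipate is not any single hard step but getting the combinatorial bookkeeping uniformly right: one must treat the two gradings on $T(V)$ (by length and by total degree) simultaneously and argue that a natural family automatically respects both, and one must be scrupulous about the permutation-composition convention so that the multiplication on $\VS$ matches composition in $SW$ on the nose rather than up to an anti-isomorphism. A secondary subtlety is that the test functors $V^{(d)}$ are not finitely many — for each $l$ there is one per composition of each $n$ into $l$ positive parts — so the "reconstruct $\xi$ from its values" step should be phrased as: for each $l$ and each $d\in\mathrm{Hom}([l],\N_{>0})$, the $\sym_l$-worth of scalars extracted from $\mu_{V^{(d)}}(x_1\ldots x_l)$ assembles into the unique $\xi\in\VS$ with $\mu=\Phi_\xi$, and this is consistent across different $d$ because a morphism $V^{(d)}\to V^{(d')}$ can be nonzero only when the relevant degrees match. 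Once this is in place, well-definedness, bijectivity, and the algebra-isomorphism claim all follow, proving Proposition~\ref{schurweyl}.
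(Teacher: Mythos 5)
Your overall strategy is the same as the paper's: send $(\sigma,d)$ to the natural family $\Phi_{(\sigma,d)}$, check naturality and the composition formula, and for surjectivity reduce to the universal test objects (the paper's $X_d=Vect(x_1,\ldots,x_n)$ with $|x_i|=d_i$, your $V^{(d)}$), using naturality along the maps $x_i\mapsto a_i$ to show that the values $\mu_{X_d}(x_1\ldots x_n)$ determine $\mu$ completely. Your remarks on injectivity and on the (tautological) compatibility of composition with the product of $\VS$ are fine; the composition-convention worry is moot because the paper simply \emph{defines} the product on $\VS$ by the composition rule.

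There is, however, one genuine gap, and it sits exactly at the only nontrivial step of the argument. You assert that $\mu_{V^{(d)}}(x_1\ldots x_l)$ ``must be a linear combination of the words $x_{\sigma(1)}\ldots x_{\sigma(l)}$ with $\sigma\in\sym_l$,'' justified by the claim that $x_1\ldots x_l$ spans a one-dimensional piece of a given letter-degree-type. This is false as stated whenever the degrees $d(i)$ are not pairwise distinct: for $d=(1,1)$ the words $x_1x_1$, $x_1x_2$, $x_2x_1$, $x_2x_2$ all have length $2$ and total degree $2$, so degree and length bookkeeping alone do not exclude words with repeated (equivalently, omitted) letters from the image. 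A priori $x_d:=\mu_{X_d}(x_1\ldots x_n)$ is a combination of $x_{\sigma(1)}\ldots x_{\sigma(n)}$ over \emph{all} maps $\sigma:[n]\to[n]$, and one must prove that the coefficients of the non-bijective $\sigma$ vanish. The paper does this with a further naturality argument: for each $i$ let $f_i:X_d\to X_d$ send $x_i$ to $0$ and fix the other $x_j$; then $T_n(f_i)(x_1\ldots x_n)=0$, so $T_n(f_i)(x_d)=0$, which kills precisely the terms with $i\notin\sigma([n])$. Your proposal invokes naturality along maps between $V$ and $V^{(d)}$ only to propagate the formula from the test objects to general $V$ (and note the direction should be $V^{(d)}\to V$, not $V\to V^{(d)}$), not to perform this exclusion; without the $f_i$ step the reconstruction of an element of $\VS$ from $\mu$ does not go through. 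Once that step is added, your argument matches the paper's.
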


\begin{proof}  The action of $\VS$ on the tensor spaces $T(V)$s is natural (it commutes with an arbitrary $T(f)$): $\VS$ is, as an algebra, canonically
embedded in $SW$.

Let $\mu$ be a family of linear maps satisfying the graded Schur-Weyl duality in degree $n$.
For any finite sequence $d=(d_1,\ldots,d_n)$ of elements of $\N_{>0}$, let us put $X_d=Vect(x_1,\ldots,x_n)$, where $|x_i|:=d_i$ for all $i$.
For an arbitrary family $a_1,\ldots,a_n$ of elements of a graded vector space $V$ with $|a_i|=d_i$ for all $i$, the map $f(x_i):=a_i$
extends uniquely to a linear map from $X_d$ to $V$. Then:
$$\mu_V(a_1\ldots a_n)=\mu_V\circ T(f)(x_1\ldots x_n)=T(f)(\mu_{X_d}(x_1\ldots x_n)),$$
so  the knowledge of the elements $x_d:=\mu_{X_d}(x_1... x_n)$ for any $d$ determines entirely $\mu$.

Let us fix $d$. As $x_d \in T_n(X_d)$, we can write:
$$x_d=\sum_{\sigma:[n]\longrightarrow [n]} a_{\sigma,d} x_{\sigma(1)}\ldots x_{\sigma(n)}.$$
Let $i\in [n]$. We define $f_i:X_d\longrightarrow X_d$ by $f_i(x_j)=x_j$ if $i\neq j$ and $0$ if $i=j$. Then:
$$0=\mu_{X_d} \circ T_n(f_i)(x_1\ldots x_n)=T_n(f_i)(x_d)=\sum_{\substack{\sigma:[n]\longrightarrow [n]\\ i\notin \sigma([n])}} 
a_{\sigma,d} x_{\sigma(1)}\ldots x_{\sigma(n)}.$$
Hence, if $\sigma([n]) \subsetneq [n]$, $a_{\sigma,d}=0$, so:
$$x_d=\sum_{\sigma \in \S_n} a_{\sigma,d} x_{\sigma(1)}\ldots x_{\sigma(n)}
=\sum_{(\sigma,e)\in \mathfrak{S}} a_{\sigma,e}\Phi_{(\sigma,e\circ \sigma)}(x_1\ldots x_n).$$
which implies that:
$$\mu=\sum_{(\sigma,e)\in \mathfrak{S}} a_{\sigma,e\circ \sigma^{-1}}\Phi_{(\sigma,e)},$$
and finally $SW=\Phi(\VS)$. \end{proof}


%



Building on these results, we define Schur-Weyl categories. 

\begin{defi}
A Schur-Weyl category is a category $\mathcal C$ with a forgetful functor $F$ to $Vect^+$ (i.e. whose objects are naturally equipped with a structure of graded vector spaces), with a functor $P$ to $Vect$, and with natural isomorphisms: 
$$\forall C\in {\mathcal C}, I(C):T\circ P(C)\cong F(C)$$
(i.e. the objects of $\mathcal C$ are naturally isomorphic to tensor spaces).
In particular, due to Proposition~\ref{schurweyl}, the objects of $\mathcal C$ are naturally equipped with an action of $\S$.
\end{defi}

Recall now the definition of various algebraic structures on the tensor spaces $T(V)$. We point out that the Proposition~\ref{schurweyl} shows that these 
structures (which can be described as the composite of natural endomorphisms of the functors $T_n$ with the natural isomorphisms $T_{m+n}\cong T_m\otimes T_n$) are naturally defined. More generally, the Proposition shows that the definition of \it natural \rm (graded) algebraic structures on the tensor spaces is constrained by the graded Schur-Weyl duality phenomenon: in concrete terms, one has to use permutations to define such structures. At last, notice that these structures will lift automatically to Schur-Weyl categories.

\begin{itemize}
\item The tensor algebra over $V$ is the tensor space over $V$ equipped with the concatenation product:
$$v_1...v_n \cdot w_1...w_m:=v_1...v_nw_1...w_m,$$
the tensor algebra is the free associative algebra over $V$.
\item The tensor coalgebra over $V$ is the tensor space over $V$ equipped with the deconcatenation coproduct $\Delta$, so that $$\Delta(v_1 ... v_n):=\sum\limits_{i=0}^nv_1...v_i\otimes v_{i+1}...v_n,$$
it is the cofree connected coassociative coalgebra over $V$ (the general structure of cofree coalgebras is more subtle, see \cite{hazewinkel2010algebras}). 
\item The shuffle algebra over $V$  is the tensor space over $V$ equipped with the shuffle product:
$$v_1...v_n\shuffle w_1...w_m:=\sum\limits_{\sigma}x_{\sigma^{-1}(1)}...x_{\sigma^{-1}(n+m)},$$
where $x_1...x_{n+m}:=v_1...v_nw_1...w_m$ and $\sigma$ runs over the $(n,m)$-shuffles in $\sym_{n+m}$, that is over the permutations such that:
$\sigma(1)<...<\sigma(n),\sigma(n+1)<...<\sigma(n+m).$
It is the free shuffle algebra over $V$ (see below for a definition).
\item The unshuffle coalgebra over $V$ is the tensor space over $V$ equipped with the unshuffle coproduct:
$$\delta(v_1 ... v_n):=\sum\limits_{I,J}v_{i_1}...v_{i_k}\otimes v_{j_1}...v_{j_{n-k}},$$
where $I=\{i_1,...,i_k\},J=\{j_1,...,j_{n-k}\}$ run over all partitions of $[n]$ into two disjoint (and possibly empty) subsets. It is the cofree unshuffle coalgebra over $V$ (see below for a definition).
\end{itemize}

Recall that, given an arbitrary  coproduct map $\Delta$ from $X$ to $X\otimes X$ in $Vect^+$, the associated vector space of primitive elements is defined by $Prim(X):=\{x\in X,\Delta(x)=x\otimes 1+1\otimes x\}$. For the deconcatenation coproduct on $T(V)$, we have $Prim(T(V))=V$, whereas for the unshuffle coproduct, over a field of characteristic $0$, $Prim(T(V))=Lie(V)$, the free Lie algebra over $V$, see e.g. \cite{reutenauer1993free}.

\section{Schur-Weyl categories of bialgebras}

The objects of a Schur-Weyl category $\mathcal C$ are naturally equipped with these four algebra and coalgebra structures. 
Let us go now one step further and investigate tensor spaces from the point of view of bialgebras.
The four algebra/coalgebra maps give rise to three interesting bialgebra structures.

\subsection{Shuffle bialgebras}

Recall first from \cite{schutzenberger1958propriete}
that the shuffle product $\shuffle$ is characterized abstractly in $Vect$ by the identity involving the left and right half-shuffles $\prec, \succ$ (with $\shuffle = \prec + \succ$):
\begin{equation}\label{shuf}
 x\prec y = y\succ x,\ (x\prec y)\prec z = x\prec (y\prec z + y\succ z).
\end{equation}
This definition is extended to $Vect^+$ by requiring $x\prec 1=x,\ 1\prec x=0$, see e.g. \cite{foissy2012natural} for details.

\begin{defi}
\begin{enumerate}
\item A shuffle bialgebra is a commutative Hopf algebra whose product, written $\shuffle$ is a shuffle product (that is, can be written $\shuffle = \prec +\succ$ in such a way that $\prec $ and $\succ$ satisfy the identities (\ref{shuf})) and, for $x,y\in Ker\ \epsilon$,  the extra axiom:
\begin{align*}
\Delta(x\prec y)&=x\prec y\otimes 1+1\otimes x\prec y+x\otimes y+x\prec y'\otimes y''\\
&+x'\prec y\otimes x''+x'\otimes x''\shuffle y+x'\prec y'\otimes x''\shuffle y'',
\end{align*}
where we use Sweedler's notation $\Delta(x)=x_1\otimes x_2=x\otimes 1+1\otimes x+x'\otimes x''$.
We shorten this axiom as:
$$\Delta(x\prec y)=x_1\prec y_1\otimes x_2\shuffle y_2.$$
\item If $A$ and $B$ are two shuffle bialgebras, a morphism of shuffle bialgebras $f:A\longrightarrow B$ is a Hopf algebra morphism from $A$ to $B$,
homogeneous of degree $0$, such that for all  $x,y \in Ker\epsilon$:
\begin{align*}
f(x\prec y)&=f(x)\prec f(y),& f(x\succ y)&=f(x)\succ f(y).
\end{align*}
\end{enumerate}

\end{defi}

The tensor space $T(V)$ is equipped with the structure of a shuffle bialgebra by the deconcatenation coproduct $\Delta$ and the left and right half-shuffle maps  $\prec$, $\succ$  (they add up to $\shuffle$)
defined recursively by:
$$x_1\prec y_1:=x_1y_1,\ \ x_1...x_n\prec y_1...y_m:=x_1(x_2...x_n\shuffle y_1...y_m),$$
$$x_1\succ y_1:=y_1x_1,\ \ x_1...x_n\succ y_1...y_m:=y_1(x_1...x_n\shuffle y_2...y_m).$$

\begin{theo}\label{SBrigidity}
A shuffle bialgebra $B$ is  isomorphic as a shuffle bialgebra to $T(Prim(B))$, where $Prim$ stands for the functor of primitive elements from the category $\mathcal{SB}$  of shuffle bialgebras to $Vect$.
Furthermore, the category $\mathcal{SB}$ is a Schur-Weyl category.
 \end{theo}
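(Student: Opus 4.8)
The plan is to construct, for every shuffle bialgebra $B$, an explicit isomorphism of shuffle bialgebras $\phi_B\colon T(Prim(B))\longrightarrow B$, and then to check that the family $(\phi_B)_B$ is natural. Write $\overline{B}:=Ker\,\epsilon$ for the augmentation ideal; by the definition of a shuffle bialgebra, $(\overline{B},\prec,\succ)$ is a shuffle algebra, i.e.\ it satisfies the identities (\ref{shuf}), and $T(Prim(B))$ with its half-shuffle products is the free shuffle algebra over $Prim(B)$. Hence the linear map $Prim(B)\hookrightarrow B$ extends uniquely to a morphism of shuffle algebras $\phi_B\colon T(Prim(B))\longrightarrow B$ (with $\phi_B(1)=1$). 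Three things then remain: (a) $\phi_B$ respects the coproducts, hence is a morphism of shuffle bialgebras; (b) $\phi_B$ is bijective; (c) $\phi_B$ is natural in $B$.

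For (a) I would induct on the tensor length. On $T^1(Prim(B))=Prim(B)$ the map $\phi_B$ sends primitives to primitives, so it commutes with the coproducts there. For a tensor of length $n\geq 2$ one writes $v_1\cdots v_n=v_1\prec(v_2\cdots v_n)$ and applies the compatibility axiom from the definition of a shuffle bialgebra in both $T(Prim(B))$, where the coproduct is deconcatenation, and $B$. Since that axiom expresses $\Delta(x\prec y)$ solely through $\prec$, $\shuffle$, $\Delta(x)$ and $\Delta(y)$, and $\phi_B$ already intertwines $\prec$ and $\shuffle$ and, by the induction hypothesis, intertwines $\Delta$ applied to $v_1$ and to $v_2\cdots v_n$, it intertwines $\Delta$ applied to $v_1\cdots v_n$. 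Thus $\phi_B$ is a morphism of shuffle bialgebras.

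For (b), injectivity is the soft half. The tensor coalgebra $T(Prim(B))$ (with deconcatenation) is cofree connected, and recall that $Prim(T(Prim(B)))=T^1=Prim(B)$, on which $\phi_B$ is the given injection; now a morphism of connected graded coalgebras that is injective on primitives is injective, for if $x\neq 0$ of minimal positive degree lay in the kernel then $\overline{\Delta}(x)$ would lie in $\ker\phi_B\otimes T(Prim(B))+T(Prim(B))\otimes\ker\phi_B$, whose components in the relevant bidegrees vanish by minimality, forcing $x$ to be primitive, a contradiction. Surjectivity is the substantial point: it is equivalent to the statement that $Prim(B)$ generates $\overline{B}$ as a shuffle algebra, which, arguing by degree induction since $\phi_B$ is an algebra morphism, reduces to the key lemma that $\overline{B}=Prim(B)+\overline{B}\prec\overline{B}$ in each degree. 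I would establish this by an induction along the coradical filtration of $B$: for $b\in\overline{B}$ one subtracts from $b$ a suitable half-shuffle product built from the reduced coproduct $\overline{\Delta}(b)$ and, using the expanded form of the compatibility axiom, checks that the difference drops in filtration degree, hence lies in $Prim(B)+\overline{B}\prec\overline{B}$ by induction. (Equivalently, one may exhibit once and for all an inverse $\psi_B\colon B\longrightarrow T(Prim(B))$ obtained by iterating a canonical half-shuffle deconcatenation, with the advantage of being manifestly defined in arbitrary characteristic.)

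For (c): given a morphism $f\colon A\longrightarrow B$ in $\mathcal{SB}$, both $f\circ\phi_A$ and $\phi_B\circ T(Prim(f))$ are morphisms of shuffle bialgebras $T(Prim(A))\longrightarrow B$ whose restriction to $T^1(Prim(A))=Prim(A)$ equals $Prim(f)$; since $T^1$ generates $T(Prim(A))$ as a shuffle algebra and both maps respect $\prec$, they agree, which is exactly the naturality square. Taking $F$ for the forgetful functor $\mathcal{SB}\to Vect^+$ and $P:=Prim\colon\mathcal{SB}\to Vect$, the isomorphisms $I(B):=\phi_B\colon T\circ P(B)\xrightarrow{\ \sim\ }F(B)$ then exhibit $\mathcal{SB}$ as a Schur-Weyl category; by Proposition~\ref{schurweyl} every shuffle bialgebra thereby acquires a natural action of $\S$. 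The main obstacle is the surjectivity of $\phi_B$ — that is, the key lemma that the primitives together with the half-shuffle product exhaust the augmentation ideal: this requires the careful filtration induction above (or the explicit inverse) and, since the ground field has arbitrary characteristic, cannot be obtained from an Eulerian-idempotent argument, which is precisely why the compatibility axiom is stated in the detailed form it is.
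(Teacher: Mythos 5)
The paper does not actually prove Theorem~\ref{SBrigidity}: it defers the rigidity statement to \cite[Prop. 15]{chapoton2002theoreme} and the precise natural-isomorphism form to \cite[Thm. 6.7]{foissy2012natural}, even noting that an earlier direct attempt in the literature was inconclusive. Your attempt at a self-contained argument therefore goes beyond the paper, and its architecture is the right one: extending $Prim(B)\hookrightarrow B$ to a shuffle algebra morphism $\phi_B$ by freeness of $T(Prim(B))$, checking compatibility with the coproducts by induction on tensor length via the axiom $\Delta(x\prec y)=x_1\prec y_1\otimes x_2\shuffle y_2$, deducing injectivity from injectivity on primitives, and obtaining naturality from freeness are all correct and essentially routine. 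You also correctly identify both the crux (surjectivity, i.e.\ generation of $\overline{B}$ by $Prim(B)$ under $\prec$) and the reason an Eulerian-idempotent shortcut is unavailable in positive characteristic.

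The genuine gap is that your sketch of the key lemma $\overline{B}=Prim(B)+\overline{B}\prec\overline{B}$ does not close it. ``Subtract from $b$ a suitable half-shuffle product built from $\overline{\Delta}(b)$ so that the difference drops in coradical filtration'' is not a proof until that product is exhibited, and the obvious candidate fails: already in $B=T(V)$ with $b=v_1v_2v_3$ one has $\sum b'\prec b''=2\,v_1v_2v_3+v_1v_3v_2$, so $b-\sum b'\prec b''=-v_1v_2v_3-v_1v_3v_2$ has the same coradical filtration degree as $b$. The half-shuffle product that does work is $\prec\circ(\pi\otimes Id)\circ\Delta$ with $\pi$ the projection onto $Prim(B)$ along the higher tensor components --- but that projection is exactly what the structure theorem is supposed to produce, so invoking it here is circular. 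Your parenthetical alternative (an explicit inverse by iterated half-shuffle deconcatenation) is the route actually taken in \cite{foissy2012natural}: one defines $\pi$ degree by degree as the solution of the fixed-point equation $\pi=(Id-\epsilon)-\pi\prec(Id-\epsilon)$, where $f\prec g(x):=f(x_1)\prec g(x_2)$, obtains $Id=\epsilon+\pi\prec Id$, and must then prove the genuinely nontrivial fact that $\pi$ takes values in $Prim(B)$; this last verification, which uses the full strength of the compatibility axiom and is what makes the argument valid in arbitrary characteristic, is absent from your proposal. As written, your argument establishes the theorem only modulo this lemma, i.e.\ modulo the results the paper itself cites.
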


The first part of the Theorem is a rigidity theorem (recall that $T(V)$ is free as a shuffle algebra and cofree as a coassociative coalgebra in $Vect^+$) and a consequence of a more general result \cite[Prop. 15]{chapoton2002theoreme}. Another, direct, proof of this result, based on compositions in the free magmatic algebra, was obtained in \cite{Burgunder}[Appendix A]. Unfortunately, in spite of many important insights on the behaviour of shuffle bialgebras, the proof is not entirely conclusive (the composition of formal power series argument at the end of the Appendix does not apply). As the author pointed out recently to one of us, an alternative strategy of proof can however  be developed sticking inside her magmatic approach.

The precise form of the Theorem, as stated here (including a construction of a natural isomorphism from  $T(Prim(B))$ to $B$, as required in a Schur--Weyl category) is 
obtained in \cite{foissy2012natural}[Thm. 6.7] (the proof contains the effective construction of the natural isomorphism). 

\subsection{Unshuffle bialgebras}
Dually, one can split the unshuffle coproduct $ \delta = \delta_\prec + \delta_\succ$ on $ \overline{T}(V)$:
 for $xX=xx_1...x_n,\ x,...,x_n\in V$,
$$  \delta_\prec(x):=x\otimes 1,\ \ \delta_\succ (x):=1\otimes x;$$
$$\delta_\prec (xX):=xX_1\otimes X_2,\ \delta_\succ (xX):=X_1\otimes xX_2,$$
where we use Sweedler's notation $\delta(X)=X_1\otimes X_2$.
Notice that $V=Prim_\prec(T(V))$, where
$$Prim_\prec(T(V)):=\{b\in \overline{T}(V),\delta_\prec(b)=b\otimes 1\}.$$

The left and right  half-unshuffles $\delta_\prec , \delta_\succ$
satisfy the identities:
\begin{equation}\label{unshuff}
\delta_\prec = \tau\circ \delta_\succ,\ (\delta_\prec\otimes Id)\circ \delta_\prec=(Id\otimes \delta)\circ \delta_\prec ,
\end{equation}
where $\tau$ stands for the switch map $\tau(x\otimes y)=y\otimes x$, and, on $Ker(\epsilon)$,
\begin{equation}\label{coununsh}
(\epsilon\otimes Id)\circ \delta_\prec (x)=0,\ (Id\otimes \epsilon)\circ \delta_\prec (x)=x,
\end{equation}
 where we recall that $\epsilon$ stands for the augmentation (the canonical projection to the ground field) in $Vect^+$.

\begin{defi} \begin{enumerate}
\item Using the shortcut $\delta_\prec(x)=x_1^\prec \otimes x_2^\prec$ (and similarly for $\delta_\succ$), an unshuffle bialgebra is a bialgebra equipped with a coassociative cocommutative coproduct
$\delta = \delta_\prec + \delta_\succ$ satisfying the above identities and an associative product $\cdot$ such that furthermore, for $x,y\in Ker\ \epsilon$:
\begin{equation}\label{coshuff}
\delta_\prec(x\cdot y)= x_1^\prec\cdot y_1\otimes x_2^\prec \cdot y_2.
\end{equation}
\item If $A$ and $B$ are two unshuffle algebras, a morphism of unshuffle algebras $f:A\longrightarrow B$ is a bialgebra morphism from $A$ to $B$,
homogeneous of degree $0$ such that:
\begin{align*}
\delta_\prec \circ f&=(f\otimes f)\circ \delta_\prec,&
\delta_\succ \circ f&=(f\otimes f)\circ \delta_\succ.
\end{align*}\end{enumerate}\end{defi}

The tensor space equipped with the concatenation product and the two half-unshuffles  $\delta_\prec , \delta_\succ$ described previously is an unshuffle bialgebra. 

The two notions of shuffle bialgebras and unshuffle bialgebras are strictly dual (in the graded sense - the graded dual of a vector space $V=\bigoplus\limits_{n\in \N}V_n$  in $Vect^+$ being the direct sum of the duals $V^\ast=\bigoplus\limits_{n\in \N}V_n^\ast$).

The rigidity theorem for unshuffle bialgebras follows by duality (this was first observed in \cite{Burgunder}[Appendix B]): they are isomorphic to free associative algebras and cofree unshuffle coalgebras.
The natural isomorphisms defining Schur-Weyl duality can be obtained by dualizing the constructions  in \cite{foissy2012natural}, in particular Corollary 3.3 on which the later proof of the structure theorem for shuffle bialgebras relies in that article.
Let us  sketch the proof of the analogue of this Corollary - the rest of the construction of the natural isomorphisms is left to the reader, we refer to  \cite{foissy2012natural} for details.

Let  $f,g$ be two endomorphisms of $T(V)$ in $Vect^+$. We set:
$$f\prec g(x):=f( x_1^\prec)g( x_2^\prec), \ f\succ g(x):=f( x_1^\succ)g( x_2^\succ),$$
$$f\shuffle g(x):=f\prec g(x)+ \ f\succ g(x).$$
The two half-products $\prec$ and $\succ$ define the structure of a noncommutative shuffle algebra (or dendrimorphic) algebra on $End(T(V))$, that is they satisfy the identities $$(f\prec g)\prec k=f\prec (g\shuffle k),\ (f\shuffle g)\succ k=f\succ (g\succ k),$$
$$(f\succ g)\prec k=f\succ (g\prec k).$$
The first of these identities follows directly, for example, from $ (\delta_\prec\otimes Id)\circ \delta_\prec=(Id\otimes \delta)\circ \delta_\prec$, and similarly for the others.
Let us write now $\pi$ for the projection from $T(V)$ to $V$ orthogonally to the other components.

\begin{lemma}
We have, in $End(T(V))$,
\begin{equation}\label{fundeqq}
Id=\epsilon +\sum\limits_{n\in\N^\ast}\pi\prec (\pi\prec (...(\pi\prec \pi)...)).
\end{equation}
\end{lemma}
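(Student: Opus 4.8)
The plan is to mimic the classical ``universal formula'' for reconstructing the identity on a tensor coalgebra from its corestriction to the cogenerators, now transported to the noncommutative-shuffle setting carried by $\End(T(V))$ with the half-products $\prec,\succ$ built from $\delta_\prec,\delta_\succ$. First I would fix a homogeneous word $v_1\dots v_n\in T^n(V)$ and evaluate both sides on it. The term $\epsilon$ contributes exactly the degree-zero (length-zero) part, so it kills every word of length $\geq 1$ and equals $Id$ on $1$; hence on $v_1\dots v_n$ with $n\geq 1$ only the sum on the right can contribute, and it suffices to show that the summand $\pi\prec(\pi\prec(\dots(\pi\prec\pi)\dots))$ with exactly $n$ copies of $\pi$ sends $v_1\dots v_n$ to $v_1\dots v_n$ and that all other summands vanish on it.

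The key computation is to unfold the definition $f\prec g(x)=f(x_1^\prec)g(x_2^\prec)$ together with the explicit formula $\delta_\prec(xX_1\otimes X_2)$, i.e. $\delta_\prec(v_1\dots v_n)=\sum v_1 v_{i_1}\dots v_{i_k}\otimes v_{j_1}\dots v_{j_{n-k}}$ where $\{i_1<\dots<i_k\}\sqcup\{j_1<\dots\}$ partitions $\{2,\dots,n\}$ (using that $\delta_\prec(xX)=xX_1\otimes X_2$ with $\delta$ the unshuffle coproduct). Applying $\pi$ to the left tensor factor forces $k=0$, so that $(\pi\prec g)(v_1\dots v_n)=v_1\cdot g(v_2\dots v_n)$. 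Iterating, the $n$-fold iterate $\pi\prec(\pi\prec(\dots\prec\pi))$ peels off $v_1$, then $v_2$, \dots, then applies the innermost $\pi$ to $v_n$ (which is a length-one word, hence fixed), and reconcatenates, giving back $v_1\dots v_n$. For a summand with $m<n$ copies of $\pi$, after peeling $m-1$ letters the innermost $\pi$ is applied to a word of length $n-m+1\geq 2$, on which $\pi$ vanishes; for $m>n$ the process runs out of letters and again one of the $\pi$'s hits a word of length $\neq 1$, so the summand is zero. Thus exactly one summand survives and returns the input, which proves \eqref{fundeqq}. I would also note the well-definedness of the infinite sum: on any fixed homogeneous component $T^n(V)$ only finitely many (in fact one) terms act nontrivially, so the sum is locally finite.

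The main obstacle, such as it is, is bookkeeping: one must be careful that $\prec$ is \emph{not} associative, so the precise right-normed bracketing in \eqref{fundeqq} matters, and one must verify that the recursion ``$(\pi\prec g)(xX)=x\cdot g(X)$'' is exactly what the chosen conventions for $\delta_\prec$ on $\overline{T}(V)$ and for $\prec$ on $\End(T(V))$ produce — in particular checking the base case via \eqref{coununsh}, namely $\delta_\prec(x)=x\otimes 1$ for $x\in V$, so that $(\pi\prec\pi)(x)=\pi(x)\cdot\pi(1)=x$. Once this single recursive identity is in hand, the rest is a clean induction on $n$, and no genuinely hard step remains; the statement is really the ``co''-analogue, dualized from \cite{foissy2012natural}, of the standard fact that a connected graded coalgebra is cofreely co-generated by its primitives through such an iterated-half-coproduct formula.
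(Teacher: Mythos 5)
Your proof is correct and follows essentially the same route as the paper: the key point in both is that $\pi$ applied to the left factor of $\delta_\prec(v_1\dots v_n)$ kills every term except $v_1\otimes v_2\dots v_n$, i.e.\ $(\pi\prec g)(v_1\dots v_n)=v_1\,g(v_2\dots v_n)$. The paper merely packages your induction on word length as the fixed-point identity $Id=\epsilon+\pi\prec Id$ followed by a perturbative expansion, so the two arguments are the same in substance.
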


Indeed, for $X:=v_1...v_n\in V^{\otimes n}$, 
$\delta_\prec (X)$ equals $v_1\otimes v_2...v_n$ plus a remainder term $R$ such that $\pi\otimes Id(R)=0$.
We get: $Id=\epsilon+\pi\prec Id$, from which the Lemma follows by a perturbative expansion. Let us mention that the latter equation can investigated systematically, see for example \cite{ebrahimi2009dendriform}.

When written in $End(B)$, for $B$ an arbitrary unshuffle bialgebra, the equation (\ref{fundeqq}) defines (implicitely) $\pi$. The iterated products $\pi\prec (\pi\prec (...(\pi\prec \pi)...))$ are then the analogues, on $B$, of the projections from $T(V)$ to the summand $V^{\otimes n}$ orthogonally to the other components.

\begin{theo}\label{UBrigidity}
An unshuffle bialgebra $B$ is  isomorphic as an unshuffle bialgebra to $T(Prim_\prec(B))$.
Furthermore, the category $\mathcal{UB}$ of unshuffle bialgebras  is a Schur-Weyl category.
 \end{theo}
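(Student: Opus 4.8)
The plan is to obtain Theorem~\ref{UBrigidity} entirely by graded duality from Theorem~\ref{SBrigidity}, exactly as announced in the text, and then to upgrade the resulting isomorphism so that it is natural in the Schur--Weyl sense. First I would record the duality dictionary: if $B$ is a connected graded unshuffle bialgebra, its graded dual $B^\ast=\bigoplus_n B_n^\ast$ is a connected graded vector space whose concatenation-type product $\cdot$ dualizes to a coproduct, and whose cocommutative coassociative coproduct $\delta=\delta_\prec+\delta_\succ$ dualizes to a commutative product $\shuffle=\prec+\succ$ with $x\prec y=(\delta_\prec)^\ast$, $x\succ y=(\delta_\succ)^\ast$. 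The identities \eqref{unshuff} and \eqref{coununsh} for $\delta_\prec,\delta_\succ$ dualize term-by-term to the half-shuffle identities \eqref{shuf} together with the unit conventions $x\prec 1=x$, $1\prec x=0$; the compatibility \eqref{coshuff} dualizes to the mixed axiom $\Delta(x\prec y)=x_1\prec y_1\otimes x_2\shuffle y_2$. Hence $B^\ast$ is a shuffle bialgebra, the assignment $B\mapsto B^\ast$ is a (contravariant) functor $\mathcal{UB}\to\mathcal{SB}$, and because all the structures in sight are graded with finite-dimensional components, double dualization $B\mapsto B^{\ast\ast}$ is naturally isomorphic to the identity on $\mathcal{UB}$.

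Next I would apply Theorem~\ref{SBrigidity} to $B^\ast$: there is an isomorphism of shuffle bialgebras $B^\ast\cong T(\mathrm{Prim}(B^\ast))$. Dualizing, and using that the graded dual of $T(W)$ with deconcatenation coproduct and shuffle product is $T(W^\ast)$ with concatenation product and unshuffle coproduct, one gets an isomorphism of unshuffle bialgebras $B\cong T\bigl((\mathrm{Prim}(B^\ast))^\ast\bigr)$. It then remains to identify the space $(\mathrm{Prim}(B^\ast))^\ast$ with $\mathrm{Prim}_\prec(B)$. This is the orthogonality statement $\mathrm{Prim}(B^\ast)=\mathrm{Prim}_\prec(B)^{\perp\!\!\perp}$ in the appropriate sense: an element $\phi\in B^\ast$ is primitive, i.e. $\Delta^\ast$-decomposable part vanishes, iff $\phi$ annihilates the image of the product $\cdot$ restricted to $\mathrm{Ker}\,\epsilon$; dually, the defining condition $\delta_\prec(b)=b\otimes 1$ for $b\in\mathrm{Prim}_\prec(B)$ is precisely the annihilator condition against $\prec$ on $B^\ast$. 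Concretely I would use the explicit projector of the Lemma: the iterated half-unshuffle products $\pi\prec(\pi\prec(\cdots\prec\pi))$ furnish, on any unshuffle bialgebra $B$, a direct-sum decomposition $B=\bigoplus_n B^{(n)}$ with $B^{(1)}=\mathrm{Prim}_\prec(B)$, and dualizing these projectors gives the analogous decomposition of $B^\ast$ with degree-one part $\mathrm{Prim}(B^\ast)$; transposition of the degree-one projector then yields the canonical identification $\mathrm{Prim}_\prec(B)\cong(\mathrm{Prim}(B^\ast))^\ast$, and this identification is natural in $B$.

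Finally, for the Schur--Weyl statement I would dualize the natural isomorphism constructed in \cite{foissy2012natural}. The point of that construction is that the isomorphism $T(\mathrm{Prim}(B^\ast))\to B^\ast$ is built out of the half-shuffle structure maps alone (through the fundamental equation analogous to \eqref{fundeqq} and Corollary~3.3 of \cite{foissy2012natural}), hence is natural in $B^\ast$, i.e. functorial on $\mathcal{SB}$; transposing it and composing with the double-dual isomorphism produces a natural isomorphism $I(B):T\circ\mathrm{Prim}_\prec(B)\xrightarrow{\ \cong\ }F(B)$ for $B\in\mathcal{UB}$, where $F$ is the forgetful functor to $Vect^+$ and $P=\mathrm{Prim}_\prec$ the functor to $Vect$. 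Together with Proposition~\ref{schurweyl}, which endows each object with the induced $\S$-action, this exhibits $\mathcal{UB}$ as a Schur--Weyl category. The main obstacle I anticipate is purely bookkeeping rather than conceptual: one must check carefully that graded duality is exact on the relevant finite-dimensional pieces, that it sends each half-operation to the intended half-co-operation with the correct side conventions (it is easy to swap $\prec$ and $\succ$, or $\delta_\prec$ and $\delta_\succ$, by a transposition), and that the naturality square for $I(B)$ is exactly the transpose of the one in \cite{foissy2012natural}; none of these steps is hard, but they are where sign- and side-errors creep in, so I would state the duality dictionary once, precisely, and invoke it uniformly, leaving the remaining verifications to the reader as the text already proposes.
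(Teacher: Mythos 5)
Your proposal follows essentially the same route as the paper: the rigidity statement is deduced by graded duality from Theorem~\ref{SBrigidity}, and the Schur--Weyl naturality is obtained by dualizing the natural isomorphism of \cite{foissy2012natural}, with the primitives matched under transposition. The only thing the paper adds is an intrinsic sketch of the dual of Corollary~3.3 of that reference, namely the dendriform structure on $End(T(V))$ and the identity (\ref{fundeqq}), which is precisely the projector decomposition you invoke in your second paragraph.
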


\subsection{Infinitesimal bialgebras}

The coproduct $\ast$ in the category $\mathcal As$ of (unital) associative algebras in $Vect^+$, or free product, is obtained as follows: let $H_1=K\oplus \overline H_1,H_2=K\oplus \overline H_2$ be two such algebras, then:
$$H_1\ast H_2:=K\oplus\bigoplus\limits_{n\in\N^\ast}(H_1\ast H_2)^{(n)}:=K\oplus \bigoplus\limits_{n\in\N^\ast}[(1,H^{\otimes n})\oplus (2,H^{\otimes n})],$$
where $(1,H^{\otimes n})$ (resp. $(2,H^{\otimes n})$) denotes alternating tensor products of $\overline H_1$ and $\overline H_2$ of length $n$ starting with $\overline H_1$
(resp. $\overline H_2$). 
For example, $(2,H^{\otimes 4})=\overline H_2\otimes \overline H_1\otimes \overline H_2\otimes \overline H_1$.
The product of two tensors $h_1\otimes ...\otimes h_n$ and $h_1'\otimes ...\otimes h_m'$ in $H_1\ast H_2$ is defined as the concatenation product $h_1\otimes ...\otimes h_n\otimes h_1'\otimes ...\otimes h_m'$ when $h_n$ and $h_1'$ belong respectively to $\overline H_1$ and $\overline H_2$ (or to $\overline H_2$ and $\overline H_1$), and otherwise as:
$h_1\otimes ...\otimes (h_n\cdot h_1')\otimes ...\otimes h_m'$.

When $H_1= T(V_1)$ and $H_2= T(V_2)$, one gets $H_1\ast H_2= T(V_1\oplus V_2)$. Moreover, by universal properties of free algebras, the linear map $\iota$ from $V$ to $T(V)\ast  T(V)$ defined by
\begin{equation}\label{iota}
\iota(v):=(1,v)+(2,v)
\end{equation}
induces an algebra map from $ T(V)$ to $ T(V)\ast  T(V)$ which is associative, unital ($\iota(x)=(1,x)+(2,x)+z$ with $z\in \bigoplus\limits_{n\geq 2}(H_1\ast H_2)^{(n)}$) and cocommutative. 
Equivalently, $ T(V)$ is a cocommutative cogroup in $\mathcal As$. 

\begin{defi}\begin{enumerate}
\item An infinitesimal bialgebra is, equivalently
\begin{itemize}
\item A cogroup in the category of associative unital algebras in $Vect^+$,
\item An associative unital algebra with product $\cdot$ and a coassociative counital coalgebra with coproduct $\Delta$  in $Vect^+$
such that furthermore, with the notation $\Delta(x)=x_1\otimes x_2$,
\begin{equation}\label{coprod2}
 {\Delta}(x\cdot y)=x\cdot {y_1}\otimes {y_2}+{x_1}\otimes{x_2}\cdot y-x\otimes y
\end{equation}
\end{itemize}
\item If $A$ and $B$ are two infinitesimal bialgebras, a morphism of infinitesimal bialgebras $f:A\longrightarrow B$ is a linear map from $A$ to $B$,
homogeneous of degree $0$, both an algebra and a coalgebra morphism.
\end{enumerate}
\end{defi}

The equivalence between these two definitions is not widely known: it is due to Livernet \cite{Livernet}; it is similar (in all respects) to the equivalence between cocommutative cogroups in the category of commutative algebras in $Vect^+$ and bicommutative bialgebras.
The equivalence follows from the observation that the structure map $\phi: H\longrightarrow H\ast H$ of such a cocommutative cogroup  is entirely determined by its restriction $\overline\Delta$ to its  image on the component $(1,H\otimes H)\cong \overline H\otimes \overline H$ of $H\ast H$. Namely,
\begin{equation}\label{phi}
 \phi(a)=\sum\limits_{n\geq 1}(1,\overline{\Delta}^{[n-1]}(a))+(2,\overline{\Delta}^{[n-1]}(a)),
\end{equation}
where $\overline{\Delta}^{[n-1]}$ stands for the iterated (coassociative) coproduct from $\overline H$ to $\overline H^{\otimes n}$. Using the notation $\overline{\Delta}(x)=\overline{x_1}\otimes \overline{x_2}$ (and more generally $\overline{\Delta}^{[n-1]}(x)=\overline{x_1}\otimes ...\otimes \overline{x_n}$), the coproduct $\overline\Delta$ satisfies the identity
\begin{equation}\label{coprod}
 \overline{\Delta}(x\cdot y)=x\otimes y+x\cdot \overline{y_1}\otimes \overline{y_2}+\overline{x_1}\otimes \overline{x_2}\cdot y
\end{equation}
so that, for $\Delta (x):=\overline{\Delta}(x)+x\otimes 1+1\otimes x$, with the notation ${\Delta}(x)={x_1}\otimes {x_2}$ we get the identity (\ref{coprod2}).
Note that in the case of $T(V)$, ${\Delta}$ is the deconcatenation coproduct.

Conversely, the identity (\ref{coprod}) is enough to ensure that (with the notation $\overline{\Delta}^{[0]}(x)=x=\overline x$)
$$\overline{\Delta}^{[k]}(x\cdot y)= \sum\limits_{i=1}^k\overline x_1\otimes ...\otimes \overline x_i\otimes \overline y_1\otimes ...\otimes \overline y_{k+1-i}$$
$$+\sum\limits_{i=1}^{k+1} \overline x_1\otimes ...\otimes \overline x_i\cdot \overline y_1\otimes ...\otimes \overline y_{k+2-i},$$
from which it follows that $ \phi $, as defined by the equation (\ref{phi}) defines a cogroup structure on $H$. 

\begin{theo}\label{IBrigidity}
An infinitesimal bialgebra $B$ is  isomorphic as an infinitesimal bialgebra to $T(Prim(B))$.
Furthermore, the category $\mathcal{IB}$ of infinitesimal bialgebras is a Schur-Weyl category.
 \end{theo}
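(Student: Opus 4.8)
The plan is to run, for infinitesimal bialgebras, the analogue of the argument sketched above for unshuffle bialgebras, the only genuinely new ingredient being the infinitesimal compatibility~(\ref{coprod2})--(\ref{coprod}). The statement contains two assertions: a \emph{rigidity} part (an infinitesimal bialgebra $B$ is free as an associative algebra and cofree as a coassociative coalgebra, hence, being connected, isomorphic to $T(Prim(B))$) and a \emph{naturality} part (this isomorphism may be chosen functorially, so that $\mathcal{IB}$ is a Schur-Weyl category with $P=Prim$). The rigidity part is, in substance, Livernet's theorem \cite{Livernet} on cocommutative cogroups in the category of associative algebras, read through the dictionary recalled above (equations~(\ref{phi})--(\ref{coprod})); I would reprove it in the form of an explicit, natural isomorphism, which settles both parts at once.

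Write $\overline B=Ker\,\epsilon$, let $\overline\Delta$ be the reduced coproduct (coassociative and, by connectedness, conilpotent), and let $m$ denote the product of $B$. For $f,g\in End(B)$ homogeneous of degree $0$ set $f\diamond g:=m\circ(f\otimes g)\circ\overline\Delta$; associativity of $m$ and coassociativity of $\overline\Delta$ make $\diamond$ an associative operation on $End(\overline B)$, so the iterated operations $\pi^{(n)}:=\pi\diamond\pi\diamond\cdots\diamond\pi$ ($n$ factors, $=m^{[n-1]}\circ\pi^{\otimes n}\circ\overline\Delta^{[n-1]}$) are unambiguous. The first step is to impose, exactly as in the Lemma establishing~(\ref{fundeqq}), the \emph{fundamental equation}
$$Id=\epsilon+\sum_{n\geq 1}\pi^{(n)}$$
as a definition of $\pi$: since $\overline\Delta$ strictly decreases the degree on $\overline B$, the term $\pi^{(n)}$ vanishes below degree $n$, so the equation determines $\pi$ recursively (for instance $\pi=Id$ in degree $1$ and $\pi=Id-m\circ\overline\Delta$ in degree $2$), and the displayed series then holds by the same perturbative expansion, using $\pi\diamond\epsilon=0$.

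The heart of the proof is then to show, and this is where~(\ref{coprod}) is used in an essential way, that the $\pi^{(n)}$ form a complete system of orthogonal idempotents on $B$ (with $\pi^{(0)}:=\epsilon$), that $\pi=\pi^{(1)}$ projects onto $Prim(B)$, and that $m$ restricts to isomorphisms $Prim(B)^{\otimes n}\longrightarrow B^{[n]}:=Im(\pi^{(n)})$. Granting this, $B=\bigoplus_{n\geq 0}B^{[n]}$ and the map $\Theta:T(Prim(B))=\bigoplus_{n}Prim(B)^{\otimes n}\longrightarrow B$ obtained by multiplying tensors together is a linear isomorphism; one checks directly from~(\ref{coprod}), by induction on length, that $\Theta$ intertwines concatenation with $m$ and the reduced deconcatenation coproduct with $\overline\Delta$, hence is an isomorphism of infinitesimal bialgebras. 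Naturality of $\Theta$ in $B$ is immediate since $\overline\Delta$, $m$, and therefore $\pi$ and every $\pi^{(n)}$, are; together with Proposition~\ref{schurweyl} and the functoriality of $Prim:\mathcal{IB}\to Vect$ this exhibits $\mathcal{IB}$ as a Schur-Weyl category and finishes the proof.

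I expect the main obstacle to be precisely this last block of verifications: extracting from the single identity~(\ref{coprod}) — by applying it and its iterates term by term, much as in the deconcatenation computation on $T(V)$ — the idempotency of $\pi$, the orthogonality of the $\pi^{(n)}$, and the bijectivity of $m:Prim(B)^{\otimes n}\to B^{[n]}$. A secondary point worth stressing is that the whole argument is characteristic-free: in contrast with the Eulerian-idempotent proofs available over a field of characteristic $0$, it uses no division, only the recursion for $\pi$ furnished by the fundamental equation, so it applies over the arbitrary ground field $K$ fixed throughout.
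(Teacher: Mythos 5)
Your overall strategy is sound --- it is in substance the proof of Theorem 2.6 of \cite{Loday} that the paper itself invokes, recast in the same ``fundamental equation'' format the paper uses for unshuffle bialgebras in (\ref{fundeqq}) --- and several of the steps you describe (the recursive well-definedness of $\pi$, the injectivity and naturality of $\Theta$, the fact that $\Theta$ intertwines the two infinitesimal bialgebra structures once its source and target are identified) are indeed routine. The problem is that the one step where the infinitesimal compatibility (\ref{coprod}) actually does any work is exactly the step you announce as ``the main obstacle'' and do not carry out: the proof that $\overline{\Delta}\circ\pi=0$, i.e.\ that $\pi(x)\in Prim(B)$ for every $x\in Ker\,\epsilon$. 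Without this, nothing forces $B$ to be generated as an algebra by its primitive elements, so the surjectivity of $\Theta$ --- and with it the whole rigidity statement --- is not established: the identity $Id=\epsilon+\sum_{n}\pi^{(n)}$ by itself only says that every element is a sum of products of elements of $Im(\pi)$, which is of no use until $Im(\pi)\subseteq Prim(B)$ is known. As written, the proposal therefore reduces the theorem to an unproved lemma rather than proving it.

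Two remarks that would let you close the gap. First, you are asking for more than you need: the full package ``the $\pi^{(n)}$ form a complete system of orthogonal idempotents and $m$ is bijective from $Prim(B)^{\otimes n}$ onto $Im(\pi^{(n)})$'' follows a posteriori once $\Theta$ is an isomorphism; upstream, only $\overline{\Delta}\circ\pi=0$ is required. (Injectivity of $\Theta$ needs no idempotents at all: apply $\overline{\Delta}^{[n-1]}$ to $\Theta(\omega)$ and read off the top-length component of $\omega$.) Second, $\overline{\Delta}\circ\pi=0$ is proved by induction on the degree using precisely the iterated-coproduct formula for $\overline{\Delta}^{[k]}(x\cdot y)$ displayed in the paper just after (\ref{coprod}); this is the computation performed in \cite{Loday}, and it is characteristic-free as you anticipate. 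For the record, the paper itself does not reprove any of this: it cites \cite{Berstein} and \cite{Loday} for both the rigidity and the naturality statements, so your proposal, once the missing lemma is supplied, would amount to a self-contained version of the argument the paper delegates to the literature.
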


The first rigidity statement is Berstein's structure theorem for cocommutative cogroups in categories of associative algebras \cite[Cor. 2.6]{Berstein} and Thm. 2.6 of \cite{Loday}. It implies that an infinitesimal bialgebra is free as an associative algebra and cofree as a coassociative coalgebra. The second statement follows from the proof of Theorem 2.6 in \cite{Loday}.

\subsection{Equivalence between Schur-Weyl categories of bialgebras}

We already noticed that any object of a Schur-Weyl category is naturally equipped with the structures of an associative algebra, of a shuffle algebra, of a coassociative coalgebra and of an unshuffle coalgebra. The same arguments show that it is naturally equipped with the structure of a shuffle bialgebra, of an unshuffle bialgebra and of an infinitesimal bialgebra.

The three structure theorems for shuffle, unshuffle and infinitesimal bialgebras imply the fundamental structure theorem:

\begin{theo}\label{theofund}
The categories of shuffle bialgebras, unshuffle bialgebras and infinitesimal bialgebras are isomorphic over $Vect^+$. They are all equipped with a natural action of the algebra of graded permutations $\VS$.
\end{theo}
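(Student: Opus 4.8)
The plan is to deduce Theorem~\ref{theofund} by combining the three rigidity results (Theorems~\ref{SBrigidity}, \ref{UBrigidity}, \ref{IBrigidity}) through the common model $T(V)$. The core observation is that each of the three categories $\mathcal{SB}$, $\mathcal{UB}$, $\mathcal{IB}$ comes, by its own rigidity theorem, with a functor $Prim$ (or $Prim_\prec$) to $Vect$ together with a natural isomorphism identifying every object with the tensor space over its primitives, equipped with the appropriate bialgebra structure; in other words, each is a Schur-Weyl category in the sense of the definition, whose $P$-functor is (a version of) $Prim$ and whose reconstruction functor is $V\mapsto T(V)$ with the relevant structure maps. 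So the proof amounts to producing, for each ordered pair of the three categories, an equivalence (in fact an isomorphism over $Vect^+$) of categories.

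First I would make explicit that the tensor space $T(V)$ simultaneously carries \emph{all three} bialgebra structures: the shuffle bialgebra structure (deconcatenation $\Delta$ with $\prec,\succ$), the unshuffle bialgebra structure (concatenation with $\delta_\prec,\delta_\succ$), and the infinitesimal bialgebra structure (concatenation with $\Delta$), and that in each case the space of ``primitives'' relevant to that structure — $Prim(T(V))=V$ for the shuffle and infinitesimal structures, $Prim_\prec(T(V))=V$ for the unshuffle structure — recovers $V$ on the nose. This is already recorded in the excerpt. Then, given for instance a shuffle bialgebra $B$, Theorem~\ref{SBrigidity} gives a natural isomorphism $B\cong T(Prim(B))$ of shuffle bialgebras; transporting the unshuffle and infinitesimal structures that $T(Prim(B))$ carries back along this isomorphism equips $B$ canonically with an unshuffle and an infinitesimal bialgebra structure, and naturality of the isomorphism makes these assignments functorial. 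This defines functors between the three categories; running the analogous construction starting from an unshuffle or an infinitesimal bialgebra gives the candidate inverses.

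The key step is then to check that these functors are mutually inverse. Here I would argue on primitives: all three reconstruction procedures send an object $B$ to ``$T$ applied to its primitive space with the transported structure,'' and because on $T(V)$ the three notions of primitive coincide with $V$, going around a round trip $\mathcal{SB}\to\mathcal{UB}\to\mathcal{SB}$ (say) returns $T(Prim(B))$ with its original shuffle structure, naturally isomorphic to $B$. More cleanly: all three categories are equivalent to a single ``universal'' Schur-Weyl category — one can take the category whose objects are pairs $(V,\text{extra data})$ where by the rigidity theorems the extra data is forced, i.e. effectively just $Vect$ with the $\VS$-action — and the three functors $Prim$ are these equivalences; composing one with the inverse of another yields the claimed isomorphisms. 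The natural $\VS$-action is then inherited: by Proposition~\ref{schurweyl} every Schur-Weyl category carries one, and the isomorphisms of categories intertwine them since they are built from the natural isomorphisms $I(C):T\circ P(C)\cong F(C)$, which by construction are $\VS$-equivariant.

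The main obstacle I expect is bookkeeping rather than conceptual: one must verify that the structure \emph{transport} is coherent, i.e. that the unshuffle (resp. infinitesimal) structure obtained on a shuffle bialgebra $B$ by pulling back from $T(Prim(B))$ does not depend on the chosen isomorphism and genuinely defines an object of $\mathcal{UB}$ (resp. $\mathcal{IB}$) — this needs the isomorphism in Theorem~\ref{SBrigidity} to be natural with respect to \emph{all} of the relevant maps, which is why the Schur-Weyl formulation (natural isomorphisms $I(C)$, not just abstract isomorphism) is doing real work. The other point requiring care is the ``isomorphic over $Vect^+$'' claim: one must confirm that these equivalences commute with the forgetful functors $F$ to $Vect^+$, which holds because in every case the underlying graded vector space of the reconstructed object is literally $T$ of the primitive space, matched to $F(B)$ by $I(B)$. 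Once these naturality verifications are in place, the theorem follows formally, and the final sentence about the $\VS$-action is immediate from Proposition~\ref{schurweyl} together with the compatibility of the $I(C)$ with the $\S$-action already noted in the definition of Schur-Weyl categories.
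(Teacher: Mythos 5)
Your proposal is correct and follows essentially the same route as the paper: both arguments invoke the three rigidity theorems to identify each object naturally with $T(Prim(B))$ (resp.\ $T(Prim_\prec(B))$) and then transport the other two bialgebra structures across this natural isomorphism, with the $\VS$-action coming from Proposition~\ref{schurweyl}. The paper's proof is in fact only a brief sketch of this transport argument for one pair of categories, so your more detailed discussion of mutual inverses and naturality is a faithful elaboration rather than a different method.
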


By \it  isomorphic over $Vect^+$\rm , we mean that the three categories are equivalent, and that the natural equivalences can be realized as natural isomorphisms of graded vector spaces (concretely, an object of any of the three categories viewed as an element of $Vect^+$ can be equipped naturally with the other two  bialgebra  structures).

\begin{proof} Let us define for example the equivalence between  the category of infinitesimal bialgebras  and the category of shuffle bialgebras.
Let $A$ be an infinitesimal bialgebra. Denoting by $V$ the graded space of primitive elements, there exists a unique morphism
of infinitesimal bialgebras $f_A:A\longrightarrow T(V)$. As $T(V)$ is also a shuffle bialgebra, via the bijection $f_A$, $A$ becomes
an infinitesimal bialgebra in a unique way. This defines the image of $A$ by the equivalence. \end{proof}

\ \par

Let us show concretely how this process can be realized in practice on the example of infinitesimal bialgebras and unshuffle bialgebras -- we will explain later on how this example allows an improvement of the understanding of  one of Berstein's key notions: the one of the underlying algebra of a cocommutative cogroup in the category of associative algebras in $Vect^+$.

Let $H$ be such a cocommutative cogroup. The structure map $\phi : H\longrightarrow H\ast H$ gives rise
to two ``half-coproducts'' $\delta_\prec , \delta_\succ$ from $H$ to $H\otimes H$ defined as follows. 
Let $h_1\otimes ...\otimes h_n\in (H\ast H)^{(n)}$, we set: $$\pi_1(h_1\otimes ...\otimes h_n):=1_{h_1\otimes ...\otimes h_n\in (1,H^{\otimes n})}h_1\cdot h_3\cdot ...\cdot h_{n-1}\otimes h_2\cdot h_4\cdot ...\cdot h_n$$
$$\pi_2(h_1\otimes ...\otimes h_n):=1_{h_1\otimes ...\otimes h_n\in (2,H^{\otimes n})}h_2\cdot h_4\cdot ...\cdot h_n\otimes h_1\cdot h_3\cdot ...\cdot h_{n-1}$$
if $n$ is even and otherwise $$\pi_1(h_1\otimes ...\otimes h_n):=1_{h_1\otimes ...\otimes h_n\in (1,H^{\otimes n})}h_1\cdot h_3\cdot ...\cdot h_{n}\otimes h_2\cdot h_4\cdot ...\cdot h_{n-1},$$
$$\pi_2(h_1\otimes ...\otimes h_n):=1_{h_1\otimes ...\otimes h_n\in (2,H^{\otimes n})} h_2\cdot h_4\cdot ...\cdot h_{n-1}\otimes h_1\cdot h_3\cdot ...\cdot h_{n}.$$
Then, $$\delta_\prec(h):=\pi_1\circ\phi(h),\ \delta_\succ(h):=\pi_2\circ\phi(h).$$
Maps $\pi_i,\ i=1,2,3$ from $H\ast H\ast H$ to $H\otimes H\otimes H$ are defined similarly. That is, distinguishing notationally between the three copies of $H$ by writing $H\ast H\ast H=H_1\ast H_2\ast H_3$, $\pi_1$ acts non trivially on $h_1\otimes ...\otimes h_n\in H_1\ast H_2\ast H_3$ if and only if $h_1\in H_1$, and so on.

\begin{prop}\label{halfcop}
 The half-coproducts $\delta_\prec , \delta_\succ$ together with the associative product define (functorially) on $H$ the structure of an unshuffle bialgebra.
\end{prop}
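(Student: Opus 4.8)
The plan is to verify the unshuffle bialgebra axioms for $(H, \cdot, \delta_\prec, \delta_\succ)$ directly, exploiting the fact that everything is controlled by the cogroup structure map $\phi$ and its restriction $\overline\Delta$ as recalled in equations~(\ref{phi}) and (\ref{coprod}). First I would unwind the definitions: applying the formula~(\ref{phi}) for $\phi$ together with the projections $\pi_1,\pi_2$, I would compute that for $x \in Ker\,\epsilon$,
\begin{align*}
\delta_\prec(x) &= x\otimes 1 + \sum_{n\geq 1}\overline{x_1}\cdot\overline{x_3}\cdots \otimes \overline{x_2}\cdot\overline{x_4}\cdots,\\
\delta_\succ(x) &= 1\otimes x + \sum_{n\geq 1}\overline{x_2}\cdot\overline{x_4}\cdots \otimes \overline{x_1}\cdot\overline{x_3}\cdots,
\end{align*}
where the iterated coproducts $\overline\Delta^{[n-1]}$ produce the tensor factors; in other words $\delta_\prec + \delta_\succ = \delta$ is the ``total'' coproduct built by symmetrizing $\overline\Delta$, and it is exactly the cocommutative coassociative coproduct $\Delta$ attached to the infinitesimal bialgebra in Theorem~\ref{IBrigidity} after splitting the odd/even positions. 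The key structural observation, which I would isolate as the heart of the proof, is the relation $\delta_\prec = \tau\circ\delta_\succ$ (immediate from the definition of $\pi_1,\pi_2$) and the half-coassociativity $(\delta_\prec\otimes Id)\circ\delta_\prec = (Id\otimes\delta)\circ\delta_\prec$, which must be extracted from the coassociativity of $\phi$ as a cogroup morphism (equivalently, from the coassociativity of $\overline\Delta$, i.e. $(\overline\Delta\otimes Id)\circ\overline\Delta = (Id\otimes\overline\Delta)\circ\overline\Delta$ expressed in the $\pi_i$ language on $H\ast H\ast H$).

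Next I would check the compatibility axiom~(\ref{coshuff}), $\delta_\prec(x\cdot y) = x_1^\prec\cdot y_1 \otimes x_2^\prec\cdot y_2$, for $x,y\in Ker\,\epsilon$. The cleanest route is to use the formula already displayed in the excerpt for the iterated coproduct of a product,
$$\overline\Delta^{[k]}(x\cdot y)= \sum_{i=1}^k \overline{x}_1\otimes\cdots\otimes\overline{x}_i\otimes\overline{y}_1\otimes\cdots\otimes\overline{y}_{k+1-i} + \sum_{i=1}^{k+1}\overline{x}_1\otimes\cdots\otimes\overline{x}_i\cdot\overline{y}_1\otimes\cdots\otimes\overline{y}_{k+2-i},$$
which is a consequence of~(\ref{coprod}). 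Feeding this into the definition of $\delta_\prec$ via $\pi_1\circ\phi$ and carefully regrouping odd versus even positions, the two sums on the right distribute exactly into $\delta_\prec(x)$ paired with $\delta(y)$ in the dendriform pattern: the ``concatenation'' terms produce $x_1^\prec\otimes x_2^\prec$ times $y_1\otimes y_2$, and the ``merging'' term $\overline{x}_i\cdot\overline{y}_1$ lands precisely at the juncture between the $x$-block and the $y$-block, realizing the product $x_2^\prec\cdot y$ or $x_1^\prec\cdot y_1 \otimes x_2^\prec\cdot y_2$ after accounting for the leading $x\otimes 1$ and $1\otimes y$ terms. This is the bookkeeping-heavy step, and I expect the main obstacle to be exactly this: keeping track of parity (which positions go left, which go right) when one inserts a product of two adjacent factors, and checking that the boundary terms $x\otimes 1$, $1\otimes y$ combine correctly with the ``middle'' merged terms.

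Finally I would record that $\delta = \delta_\prec + \delta_\succ$ is genuinely coassociative and cocommutative (cocommutativity is the hypothesis that the cogroup is cocommutative, which in the $\pi_1/\pi_2$ picture says the sum over the two $(i,H^{\otimes n})$-components is symmetric under the switch; coassociativity comes from $\phi$ being a cogroup structure map), and that the counit axioms~(\ref{coununsh}) hold because $\pi_1$ kills all components of $\phi(x)$ except the length-one one $(1,x)$ under $\epsilon\otimes Id$, giving $(\epsilon\otimes Id)\delta_\prec(x)=0$ and $(Id\otimes\epsilon)\delta_\prec(x)=x$. Functoriality is then automatic: a morphism of cocommutative cogroups commutes with $\phi$ and with the associative product, hence with $\pi_1,\pi_2$ and therefore with $\delta_\prec,\delta_\succ$, which is precisely a morphism of unshuffle bialgebras. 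Assembling these verifications against the axioms in the definition of unshuffle bialgebra completes the proof; alternatively, and more conceptually, one could note that Theorem~\ref{theofund} already guarantees that any infinitesimal bialgebra carries a canonical unshuffle bialgebra structure through the common identification with $T(Prim(B))$, and the content of the proposition is the concrete identification of that structure with the explicitly defined $\delta_\prec,\delta_\succ$ --- a remark I would include as an aside but not rely on, since the explicit formulas are what will be used in the application to finite spaces.
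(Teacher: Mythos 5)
Your proof is correct and takes essentially the same route as the paper's (much terser) three-line argument, which derives $\delta_\prec=\tau\circ\delta_\succ$ from the cocommutativity of $\phi$, obtains the half-coassociativity by identifying both sides with $\pi_1\circ\phi^{[3]}$, and deduces the compatibility (\ref{coshuff}) from $\phi$ being an algebra morphism --- the last point being exactly what your explicit bookkeeping with $\overline{\Delta}^{[k]}(x\cdot y)$ carries out. The only small inaccuracy is your claim that $\delta_\prec=\tau\circ\delta_\succ$ is ``immediate from the definition of $\pi_1,\pi_2$'': that identity genuinely requires the cocommutativity of the cogroup (which you do invoke, but only later, when discussing the cocommutativity of $\delta=\delta_\prec+\delta_\succ$).
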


The identity $\delta_\prec = \tau\circ \delta_\succ$ follows from the cocommutativity of $\phi$.
The identity
$(\delta_\prec\otimes Id)\circ \delta_\prec=(Id\otimes \delta)\circ \delta_\prec$ follows by observing that 
both maps act as 
$\pi_1\circ \phi^{[3]}$ on $H$, where $\phi^{[3]}$ is the iterated coproduct from $H$ to $H\ast H\ast H$.
The identity (\ref{coshuff}) follows from the fact that $\phi$ is a morphism of algebras.

Berstein's notion of underlying Hopf algebra of a cogroup in $\mathcal{A}s$  \cite{Berstein} is obtained by composing this functor with the forgetful functor from unshuffle bialgebras to classical bialgebras. Proposition \ref{halfcop} unravels why this notion of underlying Hopf algebra of a cogroup could prove in the end instrumental in his work (compare our approach to Berstein's original one).

\section{$B_\infty$--algebras and finite spaces}

The notion of $B_\infty$--algebra was introduced by Getzler and Jones in the category of chain complexes \cite{getzler1994operads}, we consider here the simpler notion of $B_\infty$--algebra in the subcategory $Vect$.

A $B_\infty$--algebra structure on $V$ is, by definition, a Hopf algebra structure on $T(V)$ equipped with the deconcatenation coproduct. That is, an associative algebra structure on $T(V)$  such that the product is a coalgebra map \cite[p. 48]{getzler1994operads}. Since $T(V)$ is cofree as a counital coalgebra in $Vect^+$ for the deconcatenation coproduct, the product map from $T(V)\otimes T(V)$ to $T(V)$ is entirely characterized by its projection to the subspace $V$. This yields another, equivalent, but less tractable and transparent, definition, of $B_\infty$--algebras in terms of structure maps $M_{p,q}:V^{\otimes p}\otimes V^{\otimes q}\longmapsto V,\ p,q\geq 0$ satisfying certain compatibility relations that can be deduced from the associativity of the product -- we refer again to \cite{getzler1994operads} for details.
It is natural to call the cofree Hopf algebra $T(V)$, for $V$ a $B_\infty$--algebra, the $B_\infty$--enveloping algebra of $V$.
The following corollary shows how Theorem \ref{theofund} induces automatically various characterizations of $B_\infty$-enveloping algebras (compare with \cite{Loday}, where the third characterization was obtained).

\begin{cor}\label{structure}
 The following statements are equivalent (as usual all underlying vector spaces belong to $Vect^+$):
 \begin{enumerate}
  \item $H$ is a Hopf algebra, cofree over the space of its primitive elements $V=Prim(H)$.
  \item $H$ is the $B_\infty$-enveloping algebra of a $B_\infty$-algebra $V$.
  \item $H$ is a Hopf algebra and can be equipped with the structure of an infinitesimal bialgebra whose coproduct is the coproduct of $H$.
  \item $H$ is a Hopf algebra and can be equipped with the structure of a shuffle bialgebra whose coproduct is the coproduct of $H$.
 \end{enumerate}

\end{cor}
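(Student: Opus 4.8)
The plan is to establish the equivalences by exploiting the rigidity theorems already proved, chaining them through Theorem~\ref{theofund}. The key observation is that each of statements (1)--(4) is, up to the structure theorems, a statement about the \emph{coproduct} of $H$: it should be cofree (deconcatenation) over $\mathrm{Prim}(H)$, possibly with extra compatible data. So the strategy is to show each of (2), (3), (4) is equivalent to (1) separately, with (1) playing the role of a hub.

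First I would treat $(1)\Leftrightarrow(2)$, which is essentially a restatement of the definition: by definition a $B_\infty$-algebra structure on $V$ is a Hopf algebra structure on $T(V)$ with deconcatenation coproduct, and its $B_\infty$-enveloping algebra is precisely $T(V)$. So if $H$ is cofree over $V=\mathrm{Prim}(H)$, then as a coalgebra $H\cong T(V)$ with deconcatenation, and transporting the product of $H$ across this isomorphism makes $T(V)$ a Hopf algebra with deconcatenation coproduct, i.e. equips $V$ with a $B_\infty$-structure whose enveloping algebra is $H$; conversely a $B_\infty$-enveloping algebra is cofree over its primitives by construction. The only point requiring a word is that the primitives of $T(V)$ with deconcatenation coproduct are exactly $V$, which is recalled in the excerpt.

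Next, $(1)\Leftrightarrow(3)$ and $(1)\Leftrightarrow(4)$. For the forward direction of $(1)\Rightarrow(3)$: if $H$ is cofree over $V=\mathrm{Prim}(H)$, then $H\cong T(V)$ as coalgebras, and $T(V)$ carries the infinitesimal bialgebra structure described before Theorem~\ref{IBrigidity} (free product cogroup structure), whose underlying coalgebra is deconcatenation; transporting this along the isomorphism equips $H$ with an infinitesimal bialgebra structure with the same coproduct. For $(3)\Rightarrow(1)$: if $H$ admits an infinitesimal bialgebra structure with the given coproduct, then by Theorem~\ref{IBrigidity} (Berstein--Loday rigidity) $H$ is isomorphic as an infinitesimal bialgebra to $T(\mathrm{Prim}(H))$, hence in particular cofree as a coassociative coalgebra over its primitives, which is (1). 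The arguments for (4) are identical, using Theorem~\ref{SBrigidity} in place of Theorem~\ref{IBrigidity}; alternatively one can route $(3)\Leftrightarrow(4)$ through Theorem~\ref{theofund}, which asserts the categories $\mathcal{IB}$ and $\mathcal{SB}$ are isomorphic over $Vect^+$, so that an object of $Vect^+$ carries one structure with a prescribed coproduct if and only if it carries the other. I would probably phrase it as the cycle $(1)\Rightarrow(2)\Rightarrow\cdots$ is unnecessary; instead do $(1)\Leftrightarrow(2)$, $(1)\Leftrightarrow(3)$, $(1)\Leftrightarrow(4)$ directly.

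The main subtlety — and the only place where care is needed rather than bookkeeping — is keeping track of the coproduct in the equivalences $(1)\Leftrightarrow(3)$ and $(1)\Leftrightarrow(4)$: the rigidity theorems give an isomorphism of infinitesimal (resp. shuffle) bialgebras, and one must check that this isomorphism is in particular a coalgebra isomorphism $H\cong T(V)$ for the deconcatenation coproduct, so that ``cofree over the primitives'' in statement (1) is literally the deconcatenation coalgebra $T(\mathrm{Prim}(H))$ and nothing weaker. This is immediate from the definitions of morphism in $\mathcal{IB}$ and $\mathcal{SB}$ (both require compatibility with $\Delta$), and from the fact — recalled in the excerpt — that the coproduct of $T(V)$ as an infinitesimal or shuffle bialgebra is the deconcatenation coproduct, so no real work is hidden here. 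In short, this corollary is a direct harvest of Theorems~\ref{SBrigidity}, \ref{IBrigidity} and \ref{theofund} together with the definition of $B_\infty$-algebra, and I expect the write-up to be only a few lines.
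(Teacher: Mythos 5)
Your proposal is correct and matches the paper's intended argument: the corollary is stated as a direct harvest of the rigidity theorems (Theorems~\ref{SBrigidity} and \ref{IBrigidity}, packaged as Theorem~\ref{theofund}) together with the definition of a $B_\infty$-algebra, and your hub-and-spoke reduction of (2), (3), (4) to (1) — transporting structure along a coalgebra isomorphism $H\cong T(V)$ one way, and invoking rigidity the other way — is exactly the mechanism the paper relies on. Your remark that the rigidity isomorphisms are in particular coalgebra isomorphisms onto the deconcatenation coalgebra is the one point worth making explicit, and you made it.
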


Let us show now how these ideas apply to finite topologies.

{\bf Notations}. 
Let $X$ be a finite set, and $\T$ be a topology on $X$. For any $Y\subseteq X$, we denote by $\T_{\mid Y}$ the topology induced
by $\T$ on $Y$, that is to say:
$$\T_{\mid Y}=\{O\cap Y\mid O\in \T\}.$$

\begin{defi}\label{def8}
Let $\T\in \TT_n$, $n \geq 1$. 
For $\overline\T\in \SF_n$, the equivalence class of $\T$ in $\SF$, we put:
$$\Delta(\overline\T):=\sum_{O\in \T}\overline{\T_{\mid [n] \setminus O}}\otimes \overline{\T_{\mid O}} \in \F \otimes \F.$$
\end{defi}
We let the reader check that this definition does not depend on the choice of a representative of $\overline\T$ in $\TT$.
The coproduct extends linearly to $\F$, the linear span of finite spaces.

\begin{theo}\label{th9}
\begin{enumerate}
\item $(\F,.,\Delta)$ is a commutative Hopf algebra.
\item $(\F,\succ,\Delta)$ is an infinitesimal bialgebra.
\item $\F$ is the $B_\infty$--enveloping algebra of a $B_\infty$--algebra; more precisely it is a commutative cofree Hopf algebra.
\item It can be equipped with the structure of a shuffle bialgebra or of an unshuffle bialgebra.
\end{enumerate}
\end{theo}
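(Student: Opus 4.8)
The plan is to verify the axioms for each structure in turn, heavily leveraging Theorem~\ref{theofund} and Corollary~\ref{structure} so that most of the work reduces to checking statements (1) and (2). First I would prove (1): that $\Delta$ is coassociative, counital, and that it is a $.$-algebra morphism making $(\F,.,\Delta)$ a bialgebra (hence a Hopf algebra, since $\F$ is graded connected). Coassociativity amounts to the observation that iterating the coproduct extracts nested open subsets; concretely, both $(\Delta\otimes Id)\circ\Delta$ and $(Id\otimes\Delta)\circ\Delta$ applied to $\overline\T$ yield $\sum_{O_1\subseteq O_2\in\T}\overline{\T_{\mid[n]\setminus O_2}}\otimes\overline{\T_{\mid O_2\setminus O_1}}\otimes\overline{\T_{\mid O_1}}$, using that $O_1$ open in $\T$ and $O_1\subseteq O_2$ is the same as $O_1$ open in $\T_{\mid O_2}$, and that the topology induced on $O_2\setminus O_1$ from $\T_{\mid O_2}$ agrees with that induced from $\T_{\mid[n]\setminus O_1}$. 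Counitality follows from the fact that $\emptyset$ and $[n]$ are the extreme terms. The multiplicativity $\Delta(X.Y)=\Delta(X).\Delta(Y)$ follows because the open sets of $\T.\T'$ are exactly the disjoint unions $O\sqcup O'(+n)$ with $O\in\T$, $O'\in\T'$, and both the induced topology on the complement and on the open set split accordingly as disjoint unions; a short bookkeeping check using that $(\T.\T')_{\mid A\sqcup B(+n)}=\T_{\mid A}.\T'_{\mid B}$ closes this.

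Next I would prove (2): that $(\F,\succ,\Delta)$ is an infinitesimal bialgebra, i.e. that the compatibility relation~(\ref{coprod2}), namely $\Delta(X\succ Y)=X\succ Y_1\otimes Y_2+X_1\otimes X_2\succ Y-X\otimes Y$, holds. Here the key combinatorial input is the description of the open sets of $\T\succ\T'$: they are the sets $O\sqcup[n'](+n)$ with $O\in\T$, together with the sets $O'(+n)$ with $O'\in\T'$. Splitting the sum $\sum_{O\in\T\succ\T'}$ over these two families: for an open set of the form $O'(+n)$ with $O'\subsetneq[n']$ (or $O'=[n']$ giving the term $1\otimes X\succ Y$ — careful handling of the empty/full cases is needed), the complement is $[n]\sqcup([n']\setminus O')(+n)$ and the induced topology there is $\T\succ\T'_{\mid[n']\setminus O'}$, while the induced topology on $O'(+n)$ is $\T'_{\mid O'}$; summing over such $O'$ gives $X_1\otimes X_2\succ Y$ after accounting for the boundary terms. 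For an open set $O\sqcup[n'](+n)$ with $O\in\T$, the complement is $[n]\setminus O$ with induced topology $\T_{\mid[n]\setminus O}$ and the induced topology on $O\sqcup[n'](+n)$ is $\T_{\mid O}\succ\T'$; summing gives $X\succ Y_1\otimes Y_2$ up to boundary terms. Reconciling the doubly-counted term (the one where one factor is all of $X$ or all of $Y$) produces the $-X\otimes Y$ correction, exactly matching~(\ref{coprod2}). The main obstacle throughout is precisely this careful treatment of the extreme open sets $\emptyset$ and $[n]$ (equivalently the unit $1$) and making sure the empty-space conventions in the definitions of $.$, $\succ$ and $\Delta$ are handled consistently so that no term is lost or double-counted.

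Finally, statements (3) and (4) are essentially free. Having established in (2) that $(\F,\succ,\Delta)$ is an infinitesimal bialgebra, and knowing from (1) that $(\F,.,\Delta)$ is a commutative Hopf algebra, Corollary~\ref{structure} (the equivalence of the four conditions there, with $H=(\F,.,\Delta)$ whose coproduct coincides with that of the infinitesimal bialgebra $(\F,\succ,\Delta)$) immediately yields that $\F$ is cofree over $V=Prim(\F)$ and is the $B_\infty$-enveloping algebra of a $B_\infty$-algebra, giving (3). For (4), Theorem~\ref{theofund} states that the categories of shuffle, unshuffle and infinitesimal bialgebras are isomorphic over $Vect^+$, so the infinitesimal bialgebra structure on $\F$ transports to shuffle and unshuffle bialgebra structures (on the same underlying graded vector space), giving (4). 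Thus, once (1) and (2) are in place, (3) and (4) follow by citing the general machinery, and I would state them as such rather than redoing any construction.
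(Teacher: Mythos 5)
Your proposal is correct and follows essentially the same route as the paper: a direct combinatorial verification of coassociativity and of compatibility with $.$ and $\succ$ via the explicit description of the open sets of $\T.\T'$ and $\T\succ\T'$ (including the careful bookkeeping of the extreme open sets that yields the $-X\otimes Y$ term), after which (3) and (4) are deduced from Corollary~\ref{structure} and Theorem~\ref{theofund} exactly as you indicate.
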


\begin{proof} In the Theorem, all structures are defined in $Vect^+$. 

The last two  assertions follow from Theorem \ref{theofund} together with Corollary \ref{structure}.

Let $\T\in \TT_n$, $n> 0$. 
The coassociativity of $\Delta$ follows from the observations that:
\begin{itemize}
 \item if $O$ is open in $\T$, then the open sets of $O$ are the open sets of $\T$ contained in $O$,
 \item if $O \in \T$ and $O' \in \T_{\mid [n] \setminus O}$, then $O \sqcup O'$ is an open set of $\T$,
 \item if $O_1\subseteq O_2$ are open sets of $\T$, then $O_2\setminus O_1 \in \T_{\mid [n]\setminus O_1}$.
\end{itemize}

We get then:
\begin{eqnarray*}
(\Delta \otimes Id)\circ \Delta(\overline\T)&=&\sum_{O\in \T,\: O'\in \T_{\mid [n]\setminus O} } 
\overline{(\T_{\mid [n]\setminus O})_{\mid ([n]\setminus O)\setminus O'}} \otimes 
\overline{(\T_{\mid [n]\setminus O})_{\mid O'}} \otimes \overline{\T_{\mid O}}\\
&=&\sum_{O\in \T,\: O'\in \T_{\mid [n]\setminus O} } 
\overline{\T_{\mid [n]\setminus (O \sqcup O')}} \otimes \overline{\T_{\mid O'}} \otimes \overline{\T_{\mid O}}.
\end{eqnarray*}
Putting $O_1=O$ and $O_2=O\sqcup O'$:
$$(\Delta \otimes Id)\circ \Delta(\overline\T)=\sum_{O_1\subseteq O_2 \in \T} \overline{\T_{\mid [n]\setminus O_2}}\otimes 
\overline{\T_{\mid O_2 \setminus O_1}}\otimes \overline{\T_{\mid O_1}}=(Id \otimes \Delta)\circ \Delta(\overline\T).$$
This proves that $\Delta$ is coassociative. It is obviously homogeneous of degree $0$. Moreover, $\Delta(1)=1\otimes 1$
and for any $\T\in \TT_n$, $n\geq 1$:
$$\Delta(\overline\T)=\overline\T \otimes 1+1\otimes \overline\T+\sum_{\emptyset \subsetneq O\subsetneq [n]}
\overline{\T_{\mid [n] \setminus O}}\otimes \overline{\T_{\mid O}}.$$
So $\Delta$ has a counit.

Let $\T\in \TT_n$, $\T' \in \TT_{n'}$, $n,n'\geq 0$. By definition of $\T.\T'$:
\begin{eqnarray*}
\Delta(\overline\T.\overline{\T'})&=&\sum_{O\in \T,O'\in \T'} \overline{(\T.\T')_{\mid [n+n']\setminus O.O'}}
\otimes \overline{(\T.\T')_{\mid O.O'}}\\
&=&\sum_{O\in \T,O'\in \T'} \overline{\T_{\mid [n]\setminus O}}.\overline{\T'_{[n']\setminus O'}}
\otimes \overline{\T_{\mid O}}.\overline{\T_{\mid O'}}\\
&=&\sum_{O\in \T,O'\in \T'} \left(\overline{\T_{\mid [n]\setminus O}}
\otimes \overline{\T_{\mid O}}\right).\left(\overline{\T'_{\mid [n']\setminus O'}} \otimes \overline{\T_{\mid O'}}\right)\\
&=&\Delta(\overline\T).\Delta(\overline{\T'}).
\end{eqnarray*}
Hence, $(\SF,.,\Delta)$ is a graded connected commutative Hopf algebra.

By definition of $\T \succ \T'$:
\begin{eqnarray*}
\Delta(\overline\T \succ \overline{\T'})&=&\sum_{O\in \T, O\neq \emptyset}
\overline{(\T\succ \T')_{\mid [n+n']\setminus (O\succ [n'])}}
\otimes \overline{(\T\succ \T')_{\mid O\succ [n']}}\\
&&+\sum_{O'\in \T',O'\neq [n']}
\overline{(\T \succ \T')_{\mid [n+n']\setminus O'(+n)}}
\otimes \overline{(\T\succ \T')_{\mid O'(+n)}}\\
&&+\overline{(\T\succ \T')_{\mid [n+n']\setminus [n'](+n)}}\otimes
\overline{(\T\succ \T')_{ [n'](+n)}}\\
&=&\sum_{O\in \T, O\neq \emptyset}\overline{\T_{\mid  [n]\setminus O}}\otimes
\overline{\T_{\mid O}}\succ \overline{\T'} \\
&&+\sum_{O'\in \T',O'\neq [n']}\overline{\T}\succ \overline{\T'_{\mid [n']\setminus O'}}
\otimes \overline{\T'_{\mid O'}}+\overline\T \otimes \overline{\T'}\\
&=&\sum_{O\in \T, O\neq \emptyset}\left(\overline{\T_{\mid  [n]\setminus O}}\otimes
\overline{\T_{\mid O}}\right)\succ (1\otimes \overline{\T'}) \\
&&+\sum_{O'\in \T',O'\neq [n']}(\overline\T \otimes 1)\succ \left(\overline{\T'_{\mid [n']\setminus O'}}
\otimes \overline{\T'_{\mid O'}}\right)+\overline\T \otimes \overline{\T'}\\
&=&(\Delta(\overline\T)-\overline\T\otimes 1)\succ (1\otimes \overline{\T'})+(\overline\T \otimes 1)\succ (\Delta(\overline\T)-1\otimes \overline{\T'})+\overline\T\otimes \overline{\T'}\\
&=&\Delta(\overline\T)\succ (1\otimes \overline{\T'})+(\overline\T \otimes 1)\succ \Delta(\overline\T)-\overline\T\otimes \overline{\T'}.
\end{eqnarray*}
Hence, $(\SF,\succ,\Delta)$ is an infinitesimal Hopf algebra. \end{proof}

\section{A family of morphisms to quasi-symmetric functions}

\subsection{The Hopf algebra of quasi-symmetric functions}

Let us give some reminders on quasi-symmetric functions. 
Let $A=K[[x_1,x_2,\ldots]]$ be the algebra of commutative formal series in the infinite countable set of indeterminates
$x_n$, $n \geq 1$. A formal series $f\in A$ is quasisymmetric \cite{Gessel, Stanleythesis} if for all strictly increasing maps $f:\N_{>0}\longrightarrow \N_{>0}$,
the coefficients of $x_1^{a_1}\ldots x_n^{a_n}$ and $x_{f(1)}^{a_1}\ldots x_{f(n)}^{a_n}$ in $f$ are equal, for all $a_1,\ldots,a_n \in \N$.
The subalgebra of quasisymmetric formal series is denoted by $\QSym$. For example, if $a=(a_1,\ldots,a_n)$ is a composition,
that is to say a finite sequence of elements of $\N_{>0}$, then the following formal series is quasisymmetric:
$$M_a=\sum_{i_1<\ldots<i_n} x_{i_1}^{a_1}\ldots x_{i_n}^{a_n}.$$
By convention, $M_\emptyset=1$. These elements form a basis of $\QSym$, called the monomial basis. 
Moreover, $\QSym$ is a Hopf algebra \cite{Malvenreut} for the coproduct defined by:
$$\Delta(M_{(a_1,\ldots,a_n)})=\sum_{i=0}^n M_{(a_1,\ldots,a_i)}\otimes M_{(a_{i+1},\ldots,a_n)},$$
for all compositions $(a_1,\ldots,a_n)$.

\subsection{Linear extensions of a finite topology}

In this section, we will write without further comment $\T$ for a representative of the finite space $\overline\T$. 

\begin{defi}
Let $\T$ be a topology on a finite set $E$. 
\begin{enumerate}
\item A linear extension of $\T$ is a map $f:E\longrightarrow \N_{>0}$ such that for all $i,j \in E$,
$$(i\leq_\T j)\Longrightarrow (f(i)\leq f(j)).$$
The set of linear extensions of $\T$ is denoted by $Lin(\T)$.
\item Let $f$ be a linear extension of $\T$.
\begin{enumerate}
\item We shall say that $f$ is standard if $f(E)=[k]$ for a certain integer $k$. 
The set of standard linear extensions of $\T$ is denoted by $Lin_{Std}(\T)$.
\item We denote $f(E)=\{i_1,\ldots,i_k\}$, with $i_1<\ldots<i_k$. We put:
$$P(f)=(|f^{-1}(i_1)|,\ldots,|f^{-1}(i_k)|).$$
Note that $P$ is a map from $Lin(\T)$ to the set of compositions.
\item We put:
$$\alpha(f)=|\{(i,j)\in E\times E\mid i<_\T j \mbox{ and } f(i)=f(j)\}|.$$
Recall that $i<_\T j$ if $i\leq_\T j$ and not $i\sim_\T j$.
Note that $\alpha$ is a map from $Lin(\T)$ to $\N$.
\end{enumerate}\end{enumerate}\end{defi}

{\bf Remarks.} \begin{enumerate}
\item In other words, linear extensions of $\T$ are continuous maps from $E$ to $\N_{>0}$, with the topology induced by the usual total order on $\N_{>0}$.
\item If $\T$ and $\T'$ are homeomorphic, any homeomorphism induces a bijection from $Lin(\T)$ to $Lin(\T')$, which preserves $\alpha$ and $P$.
\item If $f\in Lin(\T)$ and $g:\N_{>0}\longrightarrow \N_{>0}$ is strictly increasing, then $g\circ f \in Lin(\T)$. 
Moreover, $\alpha(g\circ f)=\alpha(f)$ and $P(g\circ f)=P(f)$.
\item For all $f \in Lin(\T)$, there exists a unique $f'\in Lin_{Std}(\T)$, such that there exists a strictly increasing $g:\N_{>0}\longrightarrow \N_{>0}$
with $g\circ f'=f$. This $f'$ is denoted by $Std(f)$. 
\end{enumerate}

\begin{theo}\label{th11}
Let $q \in K$. We put:
$$\phi_q:\left\{\begin{array}{rcl}
\SF&\longrightarrow&\QSym\\
\overline\T&\longrightarrow&\displaystyle \sum_{f\in Lin(\T)}q^{\alpha(f)} \prod_{i\in E(\T)} x_{f(i)},
\end{array}\right.$$
where $E(\T)$ is the set underlying $\T$. This defines a surjective Hopf algebra morphism from $(\F,.,\Delta)$ to $\QSym$. 
Moreover, for all finite spaces $\overline\T$:
$$\phi_q(\overline\T)=\sum_{f \in Lin_{Std}(\T)} q^{\alpha(f)}M_{P(f)}.$$
\end{theo}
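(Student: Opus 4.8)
The plan is to verify three things: that $\phi_q$ lands in $\QSym$ and admits the stated expansion in the monomial basis, that it is an algebra morphism for the product $.$, that it is a coalgebra morphism for $\Delta$, and finally that it is surjective. I would organize the argument around the reformulation $\phi_q(\overline\T)=\sum_{f\in Lin_{Std}(\T)}q^{\alpha(f)}M_{P(f)}$, since this is both the most concrete description and the one that makes the Hopf-algebraic checks transparent.

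\emph{Step 1: the monomial expansion.} Given a linear extension $f\in Lin(\T)$ with $f(E)=\{i_1<\dots<i_k\}$, the monomial $\prod_{i\in E}x_{f(i)}$ equals $x_{i_1}^{a_1}\cdots x_{i_k}^{a_k}$ where $(a_1,\dots,a_k)=P(f)$. Remark 3 after the definition says $\alpha$ and $P$ are invariant under post-composition with a strictly increasing $g:\N_{>0}\to\N_{>0}$, and Remark 4 says every $f$ factors uniquely as $g\circ Std(f)$. So summing $q^{\alpha(f)}\prod x_{f(i)}$ over all $f$ with a fixed standardization $f'\in Lin_{Std}(\T)$ reproduces exactly $q^{\alpha(f')}\sum_{i_1<\dots<i_k}x_{i_1}^{a_1}\cdots x_{i_k}^{a_k}=q^{\alpha(f')}M_{P(f')}$. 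Grouping $Lin(\T)$ by standardization gives both that $\phi_q(\overline\T)\in\QSym$ and the claimed formula; well-definedness on $\SF$ is Remark 2.

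\emph{Step 2: multiplicativity.} Here I would work directly with $\phi_q$ as a formal series. For $\T\in\TT_n$, $\T'\in\TT_{n'}$, the preorder on $\T.\T'$ is the disjoint union of the preorders on $[n]$ and on $[n']$ (shifted), so a linear extension of $\T.\T'$ is precisely a pair $(f,f')\in Lin(\T)\times Lin(\T')$ of linear extensions on the two blocks (there are no cross-relations to respect). Moreover $\alpha(f\sqcup f')=\alpha(f)+\alpha(f')$ since the pairs $i<_\T j$ counted by $\alpha$ never straddle the two blocks, and $\prod_{i\in E(\T.\T')}x_{(f\sqcup f')(i)}=\bigl(\prod_i x_{f(i)}\bigr)\bigl(\prod_i x_{f'(i)}\bigr)$. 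Hence $\phi_q(\overline\T.\overline{\T'})=\phi_q(\overline\T)\phi_q(\overline{\T'})$, and $\phi_q(1)=1$ since $Lin$ of the empty topology is a single empty map.

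\emph{Step 3: comultiplicativity.} This is the step I expect to be the main obstacle, because it requires matching the open-set extraction coproduct on $\SF$ with the deconcatenation coproduct on $\QSym$. I would prove the clean identity $\Delta\circ\phi_q=(\phi_q\otimes\phi_q)\circ\Delta$ by expanding both sides on a standard representative $\T$ on $[n]$. On the left, using the monomial expansion of Step 1 and the coproduct formula for $M_{(a_1,\dots,a_k)}$, the term indexed by $f\in Lin_{Std}(\T)$ with $P(f)=(a_1,\dots,a_k)$ and $\alpha(f)$ contributes $q^{\alpha(f)}\sum_{j=0}^k M_{(a_1,\dots,a_j)}\otimes M_{(a_{j+1},\dots,a_k)}$. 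I then reorganize: a pair (a standard linear extension $f$ together with a cut point $j\in\{0,\dots,k\}$) is the same datum as (an open set $O\in\T$, namely $O=f^{-1}(\{i_{j+1},\dots,i_k\})$, together with standard linear extensions of $\T_{\mid[n]\setminus O}$ and $\T_{\mid O}$). The key point making this a bijection is that the open sets of $\T$ are exactly the up-sets (ideals) of $\leq_\T$, so a subset $O$ arises as the ``top part'' of the values of some standard linear extension iff $O$ is an ideal, i.e. open; and for such $O$, the restricted maps $f|_{[n]\setminus O}$, $f|_O$ are linear extensions of the induced topologies, with $P$ splitting as $(a_1,\dots,a_j)$ and $(a_{j+1},\dots,a_k)$ and $\alpha$ splitting additively ($\alpha(f)=\alpha(f|_{[n]\setminus O})+\alpha(f|_O)$, because a pair $x<_\T y$ with $f(x)=f(y)$ forces $x,y$ to lie in the same fiber hence in the same block of the partition $[n]\setminus O\sqcup O$). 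Under this bijection the left side becomes exactly $\sum_{O\in\T}\phi_q(\overline{\T_{\mid[n]\setminus O}})\otimes\phi_q(\overline{\T_{\mid O}})=(\phi_q\otimes\phi_q)\circ\Delta(\overline\T)$ by Definition~\ref{def8}. I would need to double-check the two subtle directions of the bijection: that $\alpha$ of a linear extension only counts within-fiber comparable pairs (immediate from the definition of $\alpha$), and that restricting a standard linear extension to an open set and standardizing yields all standard linear extensions of $\T_{\mid O}$ (here one uses that $\leq_{\T_{\mid O}}$ is the restriction of $\leq_\T$, and that the bottom-to-top value order on $O$ can be realized independently of that on the complement).

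\emph{Step 4: surjectivity.} It suffices to hit every monomial $M_a$, $a=(a_1,\dots,a_k)$ a composition. Take $\T$ to be the ``antichain of blocks'' topology: the poset is a chain $\overline 1<\overline 2<\dots<\overline k$ where block $\overline j$ has cardinality $a_j$ (equivalently, a finite space that is a single chain with multiplicities). Actually, to avoid $q$-contributions I would instead take the $T_0$ space that is a disjoint union issue — more simply: take $\T$ whose Hasse diagram is a single chain of $k$ points, but with the $j$-th point ``fattened'' only via the composition structure; the cleanest choice is the finite space on $a_1+\dots+a_k$ points with preorder $x\leq y$ iff $\text{blk}(x)\le\text{blk}(y)$ in the chain $1<\dots<k$, blocks of sizes $a_1,\dots,a_k$. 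Then any standard linear extension must be constant on each block (within-block points are $\sim_\T$-equivalent... ) — here one must be slightly careful since within a block points are $\sim_\T$, not $<_\T$, so linear extensions need not be constant on blocks. To get a clean answer I would instead use that $\phi_q$ is graded and argue by triangularity: among spaces of a given size, choosing the chain-of-$k$-antichains $\tddeux{}{}\cdots$ type space, the leading term (in a suitable order on compositions, e.g. refinement) of $\phi_q(\overline\T)$ is $M_a$ with coefficient $1$, independent of $q$, and lower terms are indexed by coarsenings; a standard triangularity/unitriangularity argument over compositions of $n$ then yields surjectivity in each degree. I would fill in this triangularity claim by explicitly identifying, for the chain space with block sizes $a_1,\dots,a_k$, the set $Lin_{Std}(\T)$ and checking the $q^0$-coefficient corresponds to the strictly-increasing-on-blocks extension giving exactly $M_a$.

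Putting Steps 1–4 together: $\phi_q$ is a well-defined graded linear map $\SF\to\QSym$, multiplicative with $\phi_q(1)=1$, compatible with the coproducts and counits, hence a bialgebra — equivalently (by the graded connected hypothesis) a Hopf algebra — morphism from $(\F,.,\Delta)$ to $\QSym$, and it is surjective. $\qquad\square$
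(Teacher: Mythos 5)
Your Steps 1--3 follow the paper's proof essentially verbatim: the grouping of $Lin(\T)$ by standardization for the monomial expansion, the identification $Lin(\T_1.\T_2)\cong Lin(\T_1)\times Lin(\T_2)$ with additivity of $\alpha$ for multiplicativity, and, for the coproduct, the bijection between pairs (standard linear extension $f$, cut value $k$) and triples (open set $O=f^{-1}(\{k+1,\dots,\max f\})$, standard linear extensions of the two restrictions) is exactly the map $F:B\to A$ that the paper constructs; your observation that $\alpha$ splits additively because an equal-value comparable pair lies in a single fiber, and $O$ is a union of fibers, is the paper's argument as well. These steps are sound as planned.

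Step 4, however, contains a genuine error. You correctly begin with the paper's witness: the space on $a_1+\cdots+a_k$ points with $x\leq_\T y$ iff $\mathrm{blk}(x)\leq \mathrm{blk}(y)$. Your worry that ``within a block points are $\sim_\T$, not $<_\T$, so linear extensions need not be constant on blocks'' is backwards: $x\sim_\T y$ means both $x\leq_\T y$ and $y\leq_\T x$, so every linear extension satisfies $f(x)\leq f(y)\leq f(x)$, i.e. $f$ \emph{is} constant on each block. Consequently every standard linear extension of this space is a weakly increasing surjection on block indices, $P(f)$ is a coarsening of $(a_1,\dots,a_k)$ by merging consecutive parts, $\alpha(f)=0$ precisely for the strictly increasing one, and $\phi_q(\overline\T)=M_{(a_1,\dots,a_k)}+R$ with $R$ in the span of the $M_b$ with $\mathrm{length}(b)<k$ --- the paper's unitriangularity, valid for every $q$. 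The detour you then propose (antichain blocks, i.e.\ a $T_0$ ordinal sum of discrete blocks) is not only unnecessary but would actually fail: the $2$-point antichain gives $2M_{(1,1)}+M_{(2)}$ while the $2$-chain gives $M_{(1,1)}+qM_{(2)}$, so in degree $2$ the change-of-basis matrix for that family has determinant $1-2q$ and the family is not spanning at $q=1/2$. The refinement terms produced by non-constant assignments inside an antichain block carry $q$-independent coefficients $\geq 2$ and destroy triangularity in either order on compositions. Keep the indiscrete-block space; the rest of your argument then closes correctly.
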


\begin{proof} By the first remark above, $\phi_q(\overline\T)$ does not depend on the choice of the representative  $\T$ of $\overline\T$, so $\phi_q(\overline\T)$ is well-defined,
with values in $K[[x_1,x_2,\ldots]]$. By the second remark above, if $\overline\T$ is a finite space:
\begin{align*}
\phi_q(\overline\T)&=\sum_{f \in Lin_{Std}(\T)}q^{\alpha(f)}\sum_{\substack{g:[\max(f)]\longrightarrow \N_{>0},\\ \scriptsize \mbox{strictly increasing}}}
\prod_{i\in E(\T)} x_{g\circ f(1)}\ldots x_{g\circ f(\max(f))}\\
&=\sum_{f \in Lin_{Std}(\T)} q^{\alpha(f)}M_{(|f^{-1}(1)|,\ldots,|f^{-1}(\max(f))|)}\\
&=\sum_{f \in Lin_{Std}(\T)}q^{\alpha(f)} M_{P(f)}.
\end{align*}
So $\phi_q$ takes indeed its values in $\QSym$.\\

Let $\T_1,\T_2$ be representatives of two finite spaces $\overline\T_1,\overline\T_2$ such that $E(\T_1)\cap E(\T_2)=\emptyset$. The set underlying $\T_1.\T_2$ is $E(\T_1)\sqcup E(\T_2)$. 
If $f_i:E(\T_i)\longrightarrow \N_{>0}$ for $i=1,2$, we put:
$$f_1\otimes f_2:\left\{\begin{array}{rcl}
E(\T_1.\T_2)&\longrightarrow&\N_{>0}\\
i&\longrightarrow&\begin{cases}
f_1(i) \mbox{ if }i\in E(\T_1),\\
f_2(i) \mbox{ if }i \in E(\T_2).
\end{cases} \end{array}\right.$$
Then:
$$Lin(\T_1.\T_2)=\{f_1\otimes f_2 \mid (f_1,f_2) \in Lin(\T_1)\times Lin(\T_2)\}.$$
Moreover, $\alpha(f_1\otimes f_2)=\alpha(f_1)+\alpha(f_2)$, as, if $i\leq_{\T_1.\T_2} j$, then $(i,j) \in E(\T_1)^2$ or $(i,j) \in E(\T_2)^2$. We obtain:
\begin{align*}
\phi_q(\overline\T_1.\overline\T_2)&=\sum_{f_1\in Lin(\T_1),f_2 \in Lin(\T_2)} q^{\alpha(f_1)+\alpha(f_2)} \prod_{i\in E(\T_1)\sqcup E(\T_2)}x_{f_1\otimes f_2(i)}\\
&=\sum_{f_1\in Lin(\T_1),f_2 \in Lin(\T_2)} q^{\alpha(f_1)+\alpha(f_2)} \prod_{i\in E(\T_1)} x_{f_1(i)}\prod_{i\in E(\T_2)} x_{f_2(i)}\\
&=\left(\sum_{f_1 \in Lin(\T_1)} q^{\alpha(f_1)} \prod_{i\in E(\T_1)}x_{f_1(i)}\right) \left(\sum_{f_2 \in Lin(\T_2)} q^{\alpha(f_2)} \prod_{i\in E(\T_2)}x_{f_2(i)}\right) \\
&=\phi_q(\overline\T_1)\phi_q(\overline\T_2).
\end{align*}
This shows that $\phi_q$ is multiplicative.

\ \par

Let $\overline\T$ be a finite space. We put:
\begin{align*}
A&=\{(I,f_1,f_2)\mid I\mbox{ open set of }\T, f_1 \in Lin_{Std}(\T_{\mid E(T)-I}), f_2 \in Lin_{Std}(\T_{\mid I})\},\\
B&=\{(f,k)\mid f \in Lin_{Std}(\T), 0\leq k\leq \max(f)\}.
\end{align*}
We put:
$$F:\left\{\begin{array}{rcl}
B&\longrightarrow&A\\
(f,k)&\longrightarrow&(f^{-1}(\{k+1,\ldots,\max(f)\}), Std(f_{\mid [k]}), Std(f_{\mid\{k+1,\ldots,\max(f)\}})).
\end{array}\right.$$
This is well-defined: we put $F(f,k)=(I,f_1,f_2)$.
\begin{itemize}
\item Let $i \in I$ and $j\geq_\T i$. Then $f(i) \geq k+1$. As $f \in Lin(\T)$, $f(j)\geq f(i)$, so $f(j)\geq k+1$ and $j\in I$: $I$ is an open set of $\T$.
\item By restriction, $f_1$ is a linear extension of $\T_{\mid E(\T)-I}$ and $f_2$ is a linear extension of $\T_{\mid I}$.
\end{itemize}
Moreover, $F$ is injective: if $F(f,k)=F(g,l)=(I,f_1,f_2)$, then $k=l=\max(f_1)$. As $f$ is standard, for all $i \in E(\T)$:
\begin{itemize}
\item if $i\notin I$, $f(i)=g(i)=f_1(i)$,
\item if $i\in I$, $f(i)=g(i)=f_2(i)+k$.
\end{itemize} 
Finally, $F$ is surjective: if $(I,f_1,f_2) \in A$, let $f:E(\T)\longrightarrow \N_{>0}$, defined by:
\begin{itemize}
\item if $i\notin I$, $f(i)=f_1(i)$,
\item if $i\in I$, $f(i)=f_2(i)+\max(f_1)$.
\end{itemize} 
Let us prove that $f \in Lin(\T)$. If $i\leq_\T j$ in $E(\T)$, then:
\begin{itemize}
\item If $i\in I$, as $I$ is an open set of $\T$, $j\in I$. As $f_2 \in Lin(\T_{\mid I})$, then $f_2(i)\leq f_2(j)$, so $f(i)\leq f(j)$.
\item If $i\notin I$ and $j \in I$, then $f(i)\leq k<f(j)$.
\item If $i,j\notin I$, as $f_1 \in Lin(\T_{\mid E(\T)-I})$, $f(i)=f_1(i) \leq f_1(j)=f(j)$. 
\end{itemize}
$f$ is clearly standard, and $F(f,\max(f_1))=(I,f_1,f_2)$. 

As a conclusion, $F$ is bijective. Moreover, if $F(f,k)=(I,f_1,f_2)$, as if $i\in I$ and $j\notin I$, $f(i)\neq f(j)$, 
then $\alpha(f)=\alpha(f_1)+\alpha(f_2)$, and $P(f)$ is the concatenation of $P(f_1)$ and $P(f_2)$. So:
\begin{align*}
(\phi_q \otimes \phi_q)\circ \Delta(\T)&=\sum_{(I,f_1,f_2)\in A} q^{\alpha(f_1)+\alpha(f_2)} M_{P(f_1)}\otimes M_{P(f_2)}\\
&=\sum_{(f,k) \in B} q^{\alpha(f)} M_{(|f^{-1}(1)|,\ldots, |f^{-1}(k)|)}\otimes M_{(|f^{-1}(k+1)|,\ldots, |f^{-1}(\max(f))|)}\\
&=\sum_{f\in Lin_{Std}(f)} q^{\alpha(f)} \Delta(M_{P(f)})\\
&=\Delta \circ \phi_q(\T).
\end{align*}
So $\phi_q$ is a Hopf algebra morphism.\\

Let $(a_1,\ldots,a_k)$ be a composition. Let $\T$ be the topology on a set $A_1\sqcup \ldots \sqcup A_k$, with $|A_i|=a_i$ for all $i$,
defined by $x\leq_\T y$ if, and only if, $x\in A_i$ and $y\in A_j$, with $i\leq j$. Then:
$$\phi_q(\overline{\T})=M_{(a_1,\ldots,a_k)}+R,$$ where $R$ is in the linear span of the $M_b$, with $length(b)<k$.
By a triangularity argument, $\phi_q$ is surjective. \end{proof}

{\bf Examples.} Let $a,b,c \geq 1$.
\begin{align*}
\phi_q(\tdun{$a$})&=M_{(a)},\\
\phi_q(\tddeux{$a$}{$b$})&=M_{(a,b)}+q^{ab}M_{(a+b)},\\
\phi_q(\tdun{$a$}\tdun{$b$})&=M_{(a,b)}+M_{(b,a)}+M_{(a+b)},\\
\phi_q(\tdtroisdeux{$a$}{$b$}{$c$})&=M_{(a,b,c)}+q^{ab}M_{(a+b,c)}+q^{bc}M_{(a,b+c)}+q^{ab+ac+bc}M_{(a+b+c)},\\
\phi_q(\tdtroisun{$a$}{$c$}{$b$})&=M_{(a,b,c)}+M_{(a,c,b)}+M_{(a,b+c)}+q^{ab}M_{(a+b,c)}\\
&+q^{ac}M_{(a+c,b)}+q^{ab+ac}M_{(a+b+c)},\\
\phi_q(\pdtroisun{$c$}{$a$}{$b$})&=M_{(a,b,c)}+M_{(b,a,c)}+M_{(a+b,c)}+q^{ac}M_{(b,a+c)}\\
&+q^{bc}M_{(a,b+c)}+q^{ac+bc}M_{(a+b+c)},\\
\phi_q(\tddeux{$a$}{$b$}\tdun{$c$})&=M_{(a,b,c)}+M_{(a,c,b)}+M_{(c,a,b)}+M_{(a,b+c)}\\
&+M_{(a+c,b)}+q^{ab}M_{(a+b,c)}+q^{ab}M_{(c,a+b)}+q^{ab}M_{(a+b+c)},\\
\phi_q(\tdun{$a$}\tdun{$b$}\tdun{$c$})&=M_{(a,b,c)}+M_{(a,c,b)}+M_{(b,a,c)}+M_{(b,c,a)}+M_{(c,a,b)}+M_{(c,b,a)}\\
&+M_{(a+b,c)}+M_{(a+c,b)}+M_{(b+c,a)}\\
&+M_{(a,b+c)}+M_{(b,a+c)}+M_{(c,a+b)}+M_{(a+b+c)}.\\
\end{align*}

\begin{prop}
We define a product $\succ_q$ on $\QSym$ by:
$$M_{(a_1,\ldots,a_k)}\succ_q M_{(b_1,\ldots,b_l)}=M_{(a_1,\ldots,a_k,b_1,\ldots,b_l)}+q^{a_kb_1}M_{(a_1,\ldots,a_{k-1},a_k+b_1,b_2,\ldots,b_l)}.$$
Then $(\QSym,\succ_q,\Delta)$ is an infinitesimal bialgebra and $\phi_q$ is a morphism of infinitesimal bialgebras from $(\mathcal{F},\succ,\Delta)$
to $(\QSym,\succ_q,\Delta)$.
\end{prop}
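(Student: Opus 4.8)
The strategy is to derive everything from the single identity $\phi_q(X \succ Y) = \phi_q(X) \succ_q \phi_q(Y)$, i.e.\ that $\phi_q$ carries the join product $\succ$ on $\F$ to $\succ_q$; here $\succ_q$ is read as plain concatenation of compositions (so that $M_\emptyset = 1$ is a two-sided unit) whenever one of the two compositions is empty, a convention forced since $\phi_q(1) = M_\emptyset$ and $1$ is the unit of $(\F,\succ)$. Granting this intertwining, associativity of $\succ_q$ and the infinitesimal compatibility of $(\QSym,\succ_q,\Delta)$ follow formally by transporting the corresponding properties of $(\F,\succ,\Delta)$ (Theorem~\ref{th9}) along the surjection $\phi_q$ (Theorem~\ref{th11}); coassociativity and counitality of $\Delta$ on $\QSym$ are already part of its Hopf structure.

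First I would establish the intertwining identity on basis elements, using $\phi_q(\overline\T) = \sum_{f \in Lin_{Std}(\T)} q^{\alpha(f)} M_{P(f)}$ (Theorem~\ref{th11}). Fix disjoint standard representatives $\T,\T'$ with nonempty underlying sets $E(\T), E(\T')$ (the degenerate cases being trivial by the unit convention). In $\T \succ \T'$ every point of $E(\T)$ lies strictly below every point of $E(\T')$, so a standard linear extension $g$ of $\T \succ \T'$ restricts to a (necessarily standard) linear extension $f_1 = g|_{E(\T)}$ of $\T$ and, after translating down, to a standard linear extension $f_2$ of $\T'$; the only remaining datum is whether the set of values used on $E(\T)$ and the set used on $E(\T')$ share exactly one value (the \emph{glued} case) or are disjoint and consecutive (the \emph{unglued} case). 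This yields a bijection between $Lin_{Std}(\T \succ \T')$ and $Lin_{Std}(\T) \times Lin_{Std}(\T') \times \{\text{glued},\text{unglued}\}$. Writing $a = (a_1,\dots,a_k) = P(f_1)$ and $b = (b_1,\dots,b_l) = P(f_2)$: in the unglued case $P(g) = (a_1,\dots,a_k,b_1,\dots,b_l)$ and $\alpha(g) = \alpha(f_1)+\alpha(f_2)$; in the glued case $P(g) = (a_1,\dots,a_{k-1},a_k+b_1,b_2,\dots,b_l)$ and $\alpha(g) = \alpha(f_1)+\alpha(f_2)+a_k b_1$, the extra $a_k b_1$ accounting for the pairs $(i,j)$ with $i$ in the top value-block of $f_1$ and $j$ in the bottom value-block of $f_2$. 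Comparing with the definition of $\succ_q$ and summing over the bijection, bilinearity of $\succ_q$ gives $\phi_q(\overline\T \succ \overline{\T'}) = \sum_{f_1,f_2} q^{\alpha(f_1)+\alpha(f_2)}(M_{P(f_1)} \succ_q M_{P(f_2)}) = \phi_q(\overline\T) \succ_q \phi_q(\overline{\T'})$.

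The remaining assertions are formal. For associativity, given $u,v,w \in \QSym$ choose $X,Y,Z \in \F$ with $\phi_q(X)=u$, $\phi_q(Y)=v$, $\phi_q(Z)=w$; then $(u \succ_q v) \succ_q w = \phi_q((X\succ Y)\succ Z) = \phi_q(X\succ(Y\succ Z)) = u \succ_q (v \succ_q w)$ because $\succ$ is associative on $\F$, while $M_\emptyset$ is a unit by construction, so $(\QSym,\succ_q)$ is a unital associative algebra. For the infinitesimal identity (\ref{coprod2}), with $u = \phi_q(X)$ and $v = \phi_q(Y)$, compute $\Delta(u \succ_q v) = (\phi_q \otimes \phi_q)\Delta(X\succ Y)$; expanding $\Delta(X\succ Y)$ by (\ref{coprod2}) for $(\F,\succ,\Delta)$ (Theorem~\ref{th9}(2)) and applying the intertwining identity in each tensor factor, together with $(\phi_q \otimes \phi_q)\Delta(Y) = \Delta(v)$ and the same for $X$, yields $\Delta(u \succ_q v) = u \succ_q v_1 \otimes v_2 + u_1 \otimes u_2 \succ_q v - u \otimes v$. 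Hence $(\QSym,\succ_q,\Delta)$ is an infinitesimal bialgebra, and since $\phi_q$ is moreover homogeneous of degree $0$ and a coalgebra morphism (Theorem~\ref{th11}), it is a morphism of infinitesimal bialgebras.

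The one substantive step is the first: constructing the gluing bijection (with the empty-space cases absorbed into the unit convention) and, above all, checking that in the glued case $\alpha$ increases by exactly $a_k b_1$ --- equivalently, that the new pairs $i <_\T j$ with equal value are precisely (top block of $f_1$) $\times$ (bottom block of $f_2$) --- so that the exponent reproduces the definition of $\succ_q$ on the nose. Everything downstream is transport of structure along the surjective, $\succ\!\to\!\succ_q$-multiplicative, coalgebra morphism $\phi_q$.
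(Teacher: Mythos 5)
Your proposal is correct and follows essentially the same route as the paper: the glued/unglued dichotomy is exactly the paper's decomposition $Lin_{Std}(\T_1\succ\T_2)=A_=\sqcup A_<$ with the bijections $F_=$ and $F_<$, the extra exponent $a_kb_1$ in the glued case is the paper's count $|f_1^{-1}(\max(f_1))|\,|f_2^{-1}(\min(f_2))|$ of new comparable pairs sharing a value, and the infinitesimal bialgebra axioms for $(\QSym,\succ_q,\Delta)$ are likewise obtained in the paper by transporting structure along the surjection $\phi_q$.
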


\begin{proof}
Let $\overline\T_1,\overline\T_2$ be two nonempty finite spaces in $\TT_n$, resp. $\TT_m$. Let us prove that $\phi_q(\overline\T_1\succ \overline\T_2)=\phi_q(\overline\T_1)\succ_q \phi_q(\overline\T_2)$.
We choose standard representatives $\T_1,\T_2$.
Let $f \in Lin(\T_1 \succ \T_2)$. We put $f_1=f_{\mid [n]}$ and $f_2=f_{\mid \{n+1,...,n+m\}}$. If $i\in [n]$ and $j\in \{n+1,...,n+m\}$,
then $i\leq_{\T_1\succ \T_2} j$, so $f(i)\leq f(j)$. Hence, $\max(f_1)\leq \min(f_2)$. We then define:
\begin{align*}
A_<&=\{f\in Lin_{Std}(\T_1\succ \T_2)\mid \max(f_1)<\min(f_2)\},\\
A_=&=\{f\in Lin_{Std}(\T_1\succ \T_2)\mid \max(f_1)=\min(f_2)\}.
\end{align*}
We deduce from the preceding remark that $Lin_{Std}(\T_1\succ \T_2)=A_<\sqcup A_=$. Let us consider the maps
$F_<:Lin_{Std}(\T_1)\times Lin_{Std}(\T_2)\longrightarrow A_<$ and
$F_=:Lin_{Std}(\T_1)\times Lin_{Std}(\T_2)\longrightarrow A_=$ defined by:
\begin{align*}
F_<(f_1,f_2)&:\left\{\begin{array}{rcl}
[n+m]&\longrightarrow& \N_{>0}\\
i&\longrightarrow&\begin{cases}
f_1(i) \mbox{ if }i\leq n,\\
f_2(i-n)+\max(f_1)\mbox{ if } i>n;\\
\end{cases}
\end{array}\right.\\
F_=(f_1,f_2)&:\left\{\begin{array}{rcl}
[n+m]&\longrightarrow& \N_{>0}\\
i&\longrightarrow&\begin{cases}
f_1(i) \mbox{ if }i\leq n,\\
f_2(i-n)+\max(f_1)-1\mbox{ if } i>n.
\end{cases}
\end{array}\right. \end{align*}
Both are clearly bijections. Moreover, if $(f_1,f_2) \in Lin_{Std}(\T_1)\times Lin_{Std}(\T_2)$:
\begin{itemize}
\item $\alpha(F_<(f_1,f_2))=\alpha(f_1)+\alpha(f_2)$ and:
$$\alpha(F_=(f_1,f_2))=\alpha(f_1)+\alpha(f_2)+|f_1^{-1}(\max(f_1))||f_2^{-1}(\min(f_2))|,$$
the last term corresponding to the pairs $(i,j) \in [n]\times \{n+1,...,n+m\}$, with $f_1(i)=\max(f_1)$ and $f_2(j-n)=\min(f_2)$, as for such a pair $(i,j)$,
$f(i)=f(j)$ and $i<_{\T_1\succ \T_2} j$.
\item If $P(f_1)=(a_1,\ldots,a_k)$ and $P(f_2)=(b_1,\ldots,b_l)$, then $P(F_<(f_1,f_2))=(a_1,\ldots,a_k,b_1,\ldots,b_l)$ and
$P(F_=(f_1,f_2))=(a_1,\ldots,a_k+b_1,\ldots,b_l)$, so:
$$M_{P(F_<(f_1,f_2))}+q^{a_kb_1}M_{P(F_=(f_1,f_2))}=M_{P(f_1)}\succ_q M_{P(f_2)}.$$
\end{itemize}
This gives:
\begin{align*}
\phi_q(\overline\T_1\succ \overline\T_2)&=\sum_{f \in A_<} q^{\alpha(f)} M_{P(f)}+\sum_{f \in A_=} q^{\alpha(f)} M_{P(f)}\\
&=\sum_{(f_1,f_2) \in Lin_{Std}(\T_1)\times Lin_{Std}(\T_2)}q^{\alpha(f_1)+\alpha(f_2)}M_{P(f_1)}\succ_q M_{P(f_2)}\\
&=\phi_q(\overline\T_1)\succ_q \phi_q(\overline\T_2).
\end{align*}
As $\phi_q$ is surjective and $(\mathcal{F},\succ,\Delta)$ is an infinitesimal bialgebra,we obtain that $(\QSym,\succ_q,\Delta)$ is also
an infinitesimal bialgebra. \end{proof}

{\bf Remark.} Theorem 4.1 of \cite{AguiarSottile} gives an interpretation of the pair $(\QSym,\zeta_\QSym)$ as a final object in the category of graded,
connected Hopf algebras together with a character, where $\zeta_\QSym$ is the character of $\QSym$ defined by
$\zeta_\QSym(M_{(a_1,\ldots,a_k)})=\delta_{k,1}$ for all composition $(a_1,\ldots,a_k)$ of length $k\geq 1$. With this formalism, $\phi_q$ is the Hopf algebra
morphism in this category associated to the character $\zeta_q=\zeta_\QSym \circ \phi_q$ of $\F$. For any finite space $\overline\T$, of degree $n$,
$$\zeta_q(\overline\T)=q^{\alpha((n))}=q^{|\{(i,j) \in E(\T)\mid i<_\T j\}|}.$$
In particular, $\zeta_1(\overline\T)=1$ for all $\overline\T$; and for $q=0$,
$$\zeta_0(\overline\T)=\begin{cases}
1\mbox{ if $\T=\tdun{$a_1$}\:\ldots \tdun{$a_k$}\: $  for a certain $(a_1,\ldots,a_k) $},\\
0\mbox{ otherwise}.
\end{cases}$$

{\bf Remark.}  The map $\phi_q$ is not injective. For example, if $\overline\T$ and $\overline\T'$ are the following two finite spaces:
\begin{displaymath}
\xymatrix@C=4pt{ \bullet \ar@{-}[d] \ar@{-}[dr]  & \bullet \ar@{-}[d] &&&&& \bullet \ar@{-}[d]& \bullet \ar@{-}[d] \ar@{-}[dl] \\ 
		\bullet \ar@{-}[d] \ar@{-}[dr] &  \bullet \ar@{-}[d]&&&&&  \bullet \ar@{-}[d] \ar@{-}[dr]  & \bullet \ar@{-}[d]  \\ 
		\bullet &\bullet &&&&&\bullet &  \bullet } 
\end{displaymath}
then $\overline\T\neq \overline\T'$ but $\phi_q(\overline\T)=\phi_q(\overline\T')$ (this is a linear span of 204 terms). 
However, it is possible to prove that if $\T$ and $\T'$ are two topologies on the same set $E$, they are equal if, and only if, $Lin(\T)=Lin(\T')$.

\bibliographystyle{amsplain}
\bibliography{biblio}

\providecommand{\bysame}{\leavevmode\hbox to3em{\hrulefill}\thinspace}
\providecommand{\MR}{\relax\ifhmode\unskip\space\fi MR }
\providecommand{\MRhref}[2]{%
  \href{http://www.ams.org/mathscinet-getitem?mr=#1}{#2}
}
\providecommand{\href}[2]{#2}
\begin{thebibliography}{10}

\bibitem{AguiarSottile}
Marcelo Aguiar, Nantel Bergeron, and Frank Sottile, \emph{Combinatorial {H}opf
  algebras and generalized {D}ehn-{S}ommerville relation}, Compos. Math.
  \textbf{142} (2006), no.~1, 1--30.

\bibitem{Alexandroff}
P.~{Alexandroff}, \emph{{Diskrete R\"aume}}, {Rec. Math. Moscou, n. Ser.}
  \textbf{2} (1937), 501--519 (German).

\bibitem{BarmakThesis}
Jonathan~A. {Barmak}, \emph{{Algebraic topology of finite topological spaces
  and applications}}, Berlin: Springer, 2011.

\bibitem{Barmak2007}
Jonathan~Ariel Barmak and Elias~Gabriel Minian, \emph{Minimal finite models},
  J. Homotopy Relat. Struct. \textbf{2} (2007), no.~1, 127--140.

\bibitem{BarmakMinian}
Jonathan~Ariel {Barmak} and Elias~Gabriel {Minian}, \emph{{Strong homotopy
  types, nerves and collapses}}, {Discrete Comput. Geom.} \textbf{47} (2012),
  no.~2, 301--328.

\bibitem{baues1981double}
Hans~J. Baues, \emph{The double bar and cobar constructions}, Compos. Math.
  \textbf{43} (1981), no.~3, 331--341.

\bibitem{Berstein}
Israel Berstein, \emph{{On co-groups in the category of graded algebras}},
  {Trans. Amer. Math. Soc.} \textbf{115} (1965), 257--259.

\bibitem{Bott}
Raoul {Bott} and Hans {Samelson}, \emph{{On the Pontryagin product in spaces of
  paths}}, {Comment. Math. Helv.} \textbf{27} (1953), 320--337.

\bibitem{Burgunder}
Emily {Burgunder}, \emph{{A symmetric version of Kontsevich graph complex and
  Leibniz homology}}, {J. Lie Theory.} \textbf{20, No. 1} (2010), 127--165.

\bibitem{chapoton2002theoreme}
Fr{\'e}d{\'e}ric Chapoton, \emph{Un th\'eor\`eme de
  {C}artier-{M}ilnor-{M}oore-{Q}uillen pour les big\`ebres dendriformes et les
  alg\`ebres braces}, J. Pure Appl. Algebra \textbf{168} (2002), no.~1, 1--18.

\bibitem{ebrahimi2009dendriform}
Kurusch Ebrahimi-Fard and Dominique Manchon, \emph{Dendriform equations}, J.
  Algebra \textbf{322} (2009), no.~11, 4053--4079.

\bibitem{Erne}
Marcel Ern{\'e} and Kurt Stege, \emph{Counting finite posets and topologies},
  Order \textbf{8} (1991), no.~3, 247--265.

\bibitem{Fieux}
E.~Fieux and J.~Lacaze, \emph{Foldings in graphs and relations with simplicial
  complexes and posets}, Discrete Math. \textbf{312} (2012), no.~17,
  2639--2651.

\bibitem{FoissyMalv}
Lo{\"{\i}}c Foissy and Claudia Malvenuto, \emph{The {H}opf algebra of finite
  topologies and ${T}$--partitions},  (2014), arXiv:1407.0476.

\bibitem{foissy2012natural}
Lo{\"{\i}}c Foissy and Fr{\'e}d{\'e}ric Patras, \emph{Natural endomorphisms of
  shuffle algebras}, Internat. J. Algebra Comput. \textbf{23} (2013), no.~4,
  989--1009.

\bibitem{fresse1998algebre}
Beno{\^{\i}}t Fresse, \emph{Alg{\`e}bre des descentes et cogroupes dans les
  alg{\`e}bres sur une op{\'e}rade}, Bull. Soc. Math. France \textbf{126}
  (1998), no.~3, 407--433.

\bibitem{Gessel}
Ira~M. Gessel, \emph{Multipartite {$P$}-partitions and inner products of skew
  {S}chur functions}, Combinatorics and algebra ({B}oulder, {C}olo., 1983),
  Contemp. Math., vol.~34, Amer. Math. Soc., Providence, RI, 1984,
  pp.~289--317.

\bibitem{getzler1994operads}
Ezra Getzler and John~D.S. Jones, \emph{Operads, homotopy algebra and iterated
  integrals for double loop spaces}, arXiv preprint hep-th/9403055 (1994).

\bibitem{hazewinkel2010algebras}
Michiel Hazewinkel, Nadiya Gubareni, and V.~V. Kirichenko, \emph{Algebras,
  rings, and modules}, Mathematical Surveys and Monographs, no. 168, American
  Mathematical Society, Providence, RI, 2010, {L}ie algebras and {H}opf
  algebras.

\bibitem{Livernet}
Muriel {Livernet}, \emph{{A rigidity theorem for pre-Lie algebras.}}, {J. Pure
  Appl. Algebra} \textbf{207} (2006), no.~1, 1--18.

\bibitem{Loday}
Jean-Louis Loday and Mar{\'{\i}}a Ronco, \emph{On the structure of cofree
  {H}opf algebras}, J. Reine Angew. Math. \textbf{592} (2006), 123--155,
  arXiv:math/0405330.

\bibitem{Malvenreut}
Clauda Malvenuto and Christophe Reutenauer, \emph{Duality between
  quasi-symmetric functions and the {S}olomon descent algebra}, J. Algebra
  \textbf{177} (1995), no.~3, 967--982.

\bibitem{McCord1}
M.C. {McCord}, \emph{{Singular homology groups and homotopy groups of finite
  topological spaces}}, {Duke Math. J.} \textbf{33} (1966), 465--474.

\bibitem{McCord2}
\bysame, \emph{{Homotopy type comparison of a space with complexes associated
  with its open covers}}, {Proc. Am. Math. Soc.} \textbf{18} (1967), 705--708.

\bibitem{NPT}
Jean-Christophe Novelli, Fr{\'e}d{\'e}ric Patras, and Jean-Yves Thibon,
  \emph{{Natural endomorphisms of quasi-shuffle {H}opf algebras}}, {Bull. Soc.
  Math. France} \textbf{141} (2013), no.~1, 107--130.

\bibitem{patras1994algebre}
Fr{\'e}d{\'e}ric Patras, \emph{L'alg{\`e}bre des descentes d'une big{\`e}bre
  gradu{\'e}e}, J. Algebra \textbf{170} (1994), no.~2, 547--566.

\bibitem{patras1999leray}
\bysame, \emph{A {L}eray theorem for the generalization to operads of {H}opf
  algebras with divided powers}, J. Algebra \textbf{218} (1999), no.~2,
  528--542.

\bibitem{reutenauer1993free}
Christophe Reutenauer, \emph{Free {L}ie algebras}, London Mathematical Society
  Monographs, New Series 7,Oxford University Press, 1993.

\bibitem{schutzenberger1958propriete}
Marcel~P. Sch{\"u}tzenberger, \emph{Sur une propri{\'e}t{\'e} combinatoire des
  alg{\`e}bres de {L}ie libres pouvant {\^e}tre utilis{\'e}e dans un probleme
  de math{\'e}matiques appliqu{\'e}es}, S{\'e}minaire Dubreil. Algebre et
  th{\'e}orie des nombres \textbf{12} (1958), no.~1, 1--23.

\bibitem{Sharp}
Henry Sharp, Jr., \emph{Cardinality of finite topologies}, J. Combinatorial
  Theory \textbf{5} (1968), no.~1, 82--86.

\bibitem{Sloane}
N.~J.~A. Sloane, \emph{On-line encyclopedia of integer sequences},
  http://oeis.org/.

\bibitem{Stanley}
Richard~P. Stanley, \emph{On the number of open sets of finite topologies},
  Journal of Combinatorial Theory, Series A \textbf{10} (1971), no.~1, 74--79.

\bibitem{Stanleythesis}
\bysame, \emph{Ordered structures and partitions}, American Mathematical
  Society, Providence, R.I., 1972, Memoirs of the American Mathematical
  Society, No. 119.

\bibitem{Stong}
Robert~E. Stong, \emph{Finite topological spaces}, Trans. Amer. Math. Soc.
  \textbf{123} (1966), 325--340.

\bibitem{Wright}
J.A. Wright, \emph{There are {718 6}-point topologies, quasi-orderings and
  transgraphs}, Notices Amer. Math. Soc. \textbf{17} (1970), 646.

\end{thebibliography}

\end{document}